\documentclass[11pt]{amsart}
\usepackage{amssymb}
\usepackage{amsmath}
\usepackage{amsthm}
\usepackage{mathrsfs}
\usepackage[all]{xy}
\usepackage{bbm}
\usepackage{fullpage}

\setlength\fboxsep{1pt}

\newcommand{\bB}{\ensuremath{\mathbb{B}}}

\newcommand{\sC}{\ensuremath{\mathscr{C}}}
\newcommand{\sD}{\ensuremath{\mathscr{D}}}

\newcommand{\bK}{\ensuremath{\mathbb{K}}}

\newcommand{\cP}{\ensuremath{\mathcal{P}}}
\newcommand{\sP}{\ensuremath{\mathscr{P}}}

\newcommand{\sV}{\ensuremath{\mathscr{V}}}

\newcommand{\smsh}{\wedge}
\newcommand{\iso}{\cong}
\newcommand{\into}{\hookrightarrow}

\newcommand{\boxB}{\boxed{\mathbf{B}}}

\newcommand{\weakName}{{\bf SMC}-category}
\newcommand{\weakNames}{{\bf SMC}-categories}
\newcommand{\WeakNames}{{\bf SMC}-categories}

\newcommand{\weakMathName}{\ensuremath{\mathbf{SMC\text{-}Cat}}}
\newcommand{\strictName}{{\bf PC}-category}
\newcommand{\strictNames}{{\bf PC}-categories}
\newcommand{\weakFunctor}{\ensuremath{\mathbf{SMC}}-functor}
\newcommand{\weakFunctors}{{\weakFunctor}s}

\newcounter{myCounter}

\begin{document}
\title[Strictification of \weakNames]{
Strictification of categories weakly enriched in symmetric monoidal categories.
}
\author{
Bertrand Guillou}
\address{Department of Mathematics, University of Illinois at Urbana-Champaign, 1409 W. Green Street, Urbana, IL 61801}
\email{bertg@illinois.edu}


\begin{abstract}We offer two proofs that categories weakly enriched over symmetric monoidal categories can be strictified to categories enriched in permutative categories. This is a "many $0$-cells" version of the strictification of bimonoidal categories to strict ones.
\end{abstract}
\subjclass[2000]{Primary 19D23; Secondary 18D05}
\keywords{Bimonoidal Category, Coherence}

\maketitle

\numberwithin{equation}{section}
\numberwithin{figure}{section}

\newtheorem{thm}{Theorem}[section]
\newtheorem{lemma}[thm]{Lemma}
\newtheorem{conj}[thm]{Conjecture}
\newtheorem{prop}[thm]{Proposition}
\newtheorem{cor}[thm]{Corollary}

\theoremstyle{definition}
\newtheorem{qst}{Question}
\newtheorem{const}[thm]{Construction}
\newtheorem{defn}[thm]{Definition}
\newtheorem{eg}[thm]{Example}
\newtheorem{rmk}[thm]{Remark}


\section{Introduction}

Categories with additional structure have long played an important role in homotopy theory. The classifying space $B\sC$ of a symmetric monoidal category $\sC$ is an $H$-space which is associative and commutative up to all possible higher homotopies, alias an $E_\infty$ space. Up to a group completion, $B\sC$ is thus the zeroth space of a spectrum, called the $\bK$-theory spectrum of $\sC$.
By a classical result of Isbell \cite{Isb}, any (symmetric) monoidal category is equivalent to one in which the monoidal product is strictly associative and unital, and it is convenient to replace a symmetric monoidal $\sC$ by a strict one in order to build the spectrum $\bK(\sC)$.

In cases of interest, such as the categories of finite sets or finitely generated projective modules over a commutative ring, the symmetric monoidal category $\sC$ has a second monoidal structure, which is to be thought of as multiplicative. The resulting structure is called a bimonoidal category, and again any bimonoidal category is equivalent to a strict one \cite[\S6.3]{MQRT}. Using a $\bK$-theory functor that has good multiplicative properties, as in \cite{EM}, the $\bK$-theory spectrum $\bK(\sC)$ of a (strict) bimonoidal category $\sC$ inherits the structure of a ring spectrum. In the case that the multiplicative monoidal structure is symmetric, $\bK(\sC)$ is a commutative ring spectrum.

It is then natural to ask for structure on a (strict) symmetric monoidal $\sD$ that will make $\bK(\sD)$ a module over $\bK(\sC)$ for (strict) bimonoidal $\sC$; this question has been studied in \cite{EM}, even in the more difficult case of a symmetric bimonoidal $\sC$. 

There is another way to think about rings and modules, through the language of enriched categories. Recall that a (small) spectral category $\bB$ has a set of objects $\{a, b, \dots\}$. For every pair of objects $a$ and $b$, there is a spectrum $\bB(a,b)$, and for every triple of objects $a$, $b$, $c$ one has a composition pairing
\[\bB(b,c)\smsh\bB(a,b)\longrightarrow\bB(a,c).\]
Finally, for every $a$ there is a unit map
\[S^0\longrightarrow\bB(a,a),\]
and the composition is associative and unital. Thus, each $\bB(a,a)$ is a ring spectrum, and each $\bB(a,b)$ is a $\bB(b,b)$-$\bB(a,a)$-bimodule. One may ask what categorical structure, when fed into a nice $\bK$-theory machine, will produce a spectral category, and again an answer is provided by the work of \cite{EM}. Essentially, the categorical input is a $2$-category $\mathbf{C}$, together with a strict symmetric monoidal structure on each $\mathbf{C}(c,c')$, such that the composition maps are bilinear. 
This is essentially a category enriched in the category of permutative categories (i.e. symmetric strict monoidal categories). Following the convention of  writing $\sV$-category for ``category enriched in $\sV$'', we call such a structure a PermCat-category, or \strictName\ for short, and the work of \cite{EM} provides the following result:

\begin{thm} If $\mathbf{C}$ is a \strictName, then $\bK(\mathbf{C})$ is a spectral category.
\end{thm}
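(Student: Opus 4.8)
The strategy is to recognize a \strictName\ and a spectral category as two instances of a single notion --- a category enriched over a multicategory --- and then to change the enriching multicategory along the $\bK$-theory functor. Let $\mathbf{PermCat}$ denote the multicategory whose objects are permutative categories and whose $n$-ary morphisms $(\sC_1,\dots,\sC_n)\to\sD$ are the $n$-linear functors, and let $\mathbf{Sp}$ denote the multicategory associated to the symmetric monoidal category of (symmetric) spectra under $\smsh$, so that an $n$-ary morphism $(X_1,\dots,X_n)\to Y$ in $\mathbf{Sp}$ is a map $X_1\smsh\cdots\smsh X_n\to Y$ and a nullary morphism into $Y$ is a map $S^0\to Y$. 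Unwinding definitions, a \strictName\ is exactly a category enriched over $\mathbf{PermCat}$: a set of objects, a permutative category $\mathbf{C}(c,c')$ for each pair, a bilinear composition morphism $\mathbf{C}(c',c''),\mathbf{C}(c,c')\to\mathbf{C}(c,c'')$ and a nullary unit morphism $(\,)\to\mathbf{C}(c,c)$ for each $c$, subject to associativity and unit equalities which --- since $\mathbf{C}$ underlies a $2$-category --- hold strictly. Likewise, the definition of spectral category recalled above is precisely that of a category enriched over $\mathbf{Sp}$.

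The one substantial input is the theorem of Elmendorf--Mandell \cite{EM} that $\bK$-theory refines to a multifunctor $\bK\colon\mathbf{PermCat}\to\mathbf{Sp}$. Granting this, I would simply transport $\mathbf{C}$: set $\bK(\mathbf{C})$ to have the same objects, put $\bK(\mathbf{C})(c,c'):=\bK(\mathbf{C}(c,c'))$, and take as composition and unit the images under $\bK$ of the composition and unit morphisms of $\mathbf{C}$. Since $\bK$ carries a bilinear functor $\sC_1\times\sC_2\to\sD$ to a map $\bK(\sC_1)\smsh\bK(\sC_2)\to\bK(\sD)$, this yields composition pairings of the shape required of a spectral category, and since $\bK$ carries nullary morphisms to nullary morphisms it yields the unit maps $S^0\to\bK(\mathbf{C}(c,c))$. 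Because a multifunctor preserves multicategory composition and identities on the nose, the associativity and unit axioms for $\bK(\mathbf{C})$ are the images under $\bK$ of the corresponding (strict) identities in $\mathbf{C}$, so $\bK(\mathbf{C})$ is a spectral category.

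Thus the proof is essentially formal once \cite{EM} is invoked: the general principle is that a multifunctor of multicategories induces a change-of-enrichment functor on enriched categories. The only steps that require care --- and they are not so much obstacles as bookkeeping --- are the identifications in the first paragraph: matching the hands-on description of a \strictName\ (a $2$-category with compatible strict symmetric monoidal structures on its hom-categories and bilinear composition) with enrichment over $\mathbf{PermCat}$, and similarly for spectral categories over $\mathbf{Sp}$; and the attendant point that one must work throughout in a model of spectra in which $\smsh$ is a genuine symmetric monoidal product, this being exactly the context in which the Elmendorf--Mandell multifunctor is constructed.
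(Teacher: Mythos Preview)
The paper does not actually prove this theorem: it is stated in the introduction as a result \emph{provided by} the work of \cite{EM}, with no further argument given. Your sketch correctly identifies the mechanism behind that attribution --- the Elmendorf--Mandell $\bK$-theory multifunctor $\mathbf{PermCat}\to\mathbf{Sp}$ together with the formal change-of-enrichment along a multifunctor --- and this is exactly how the result is meant to be extracted from \cite{EM}, so your approach is both correct and in line with the paper's intent.
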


The structures that one encounters in practice, however, are not typically \strictNames\ but rather the less strict version that we call weak SymMonCat-categories, or \weakNames\ for short. Rather than a $2$-category one has a weak $2$-category, or bicategory, and the monoidal structure on each $\mathbf{C}(c,c')$ is not strict. 
%
As one might expect, a weak structure of this sort can be rigidified into a strict structure, and this is our main result.

\begin{thm}\label{MainThm} Any \weakName\ is biequivalent to a \strictName\ via a map of \weakNames.
\end{thm}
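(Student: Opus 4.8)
The plan is to decouple the two strictifications hidden in the statement: first rigidify the symmetric monoidal structure on each hom-category into a permutative one, and then rigidify the ambient bicategory, together with the weak bilinearity of composition, into a $2$-category with strictly bilinear composition; finally I would check that the resulting comparison is a biequivalence and a map of \weakNames.

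For the first step I would use the standard strictification $\Phi\sC$ of a symmetric monoidal category $\sC$ underlying Isbell's theorem \cite{Isb}: objects are finite words $c_1\cdots c_n$ in $\mathrm{ob}(\sC)$, a morphism $c_1\cdots c_m \to d_1\cdots d_n$ is a morphism $c_1\otimes\cdots\otimes c_m \to d_1\otimes\cdots\otimes d_n$ of $\sC$ (the empty word going to the unit), and tensor is concatenation; this is permutative and the functor $\sC \to \Phi\sC$ is a symmetric monoidal equivalence. The construction is $2$-functorial in $\sC$, and I would equip it with the coherence of a monoidal functor for the tensor product $\otimes$ of symmetric monoidal categories that classifies bilinear functors. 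Applying $\Phi$ to every hom-category of a \weakName\ $\mathbf{C}$ and transporting composition and units along these structure maps should then produce a \weakName\ $\Phi_*\mathbf{C}$ with permutative hom-categories, together with an identity-on-objects \weakFunctor\ $\mathbf{C} \to \Phi_*\mathbf{C}$ that is a biequivalence (since each $\sC \to \Phi\sC$ is).

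For the second step I would run a $\mathbf{PermCat}$-enriched bicategorical Yoneda argument. Writing $\mathbf{V} = (\mathbf{PermCat}, \otimes)$ with its tensor product taken strictly associative and unital, a (strict) $\mathbf{V}$-category is precisely a \strictName. Given $\Phi_*\mathbf{C}$, I would form the $\mathbf{V}$-valued pseudo-presheaves $\Phi_*\mathbf{C}(-,a)$ and let $\mathbf{D}$ have the same objects, with $\mathbf{D}(a,b)$ the permutative category of pseudo-$\mathbf{V}$-natural transformations $\Phi_*\mathbf{C}(-,a) \Rightarrow \Phi_*\mathbf{C}(-,b)$ and modifications; the permutative structure is inherited pointwise from the codomains $\Phi_*\mathbf{C}(-,b)$, and composition of such transformations is strictly associative, strictly unital, and bilinear, so $\mathbf{D}$ is a \strictName. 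The enriched Yoneda embedding $a \mapsto \Phi_*\mathbf{C}(-,a)$ is then a \weakFunctor\ and a biequivalence onto its full sub-\strictName, and composing with the comparison from the first step exhibits $\mathbf{C}$ as biequivalent to a \strictName\ via a \weakFunctor.

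The hard part will be the first step — specifically, producing the coherent comparison functors $\Phi\sC \otimes \Phi\sD \to \Phi(\sC\otimes\sD)$ (equivalently, checking that $\Phi$ carries bilinear functors to bilinear functors compatibly with the associativity and interchange constraints, and, if one wants composition in $\mathbf{D}$ strictly bilinear, that this survives), so that the bilinearity data and interchange $2$-cells of $\mathbf{C}$ transport without generating new coherence obligations. The bicategorical bookkeeping in the second step — that the Yoneda embedding is genuinely $\mathbf{V}$-enriched and a biequivalence, and that composition in $\mathbf{D}$ is strict on the nose — I expect to be routine but lengthy. I would also anticipate a more hands-on second proof, building the strict model directly from words of composable $1$-cells in the spirit of the strictification of bimonoidal categories in \cite[\S6.3]{MQRT}.
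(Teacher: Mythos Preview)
Your outline touches both of the paper's proofs but organises them differently, and carries one unnecessary step. The hands-on alternative you anticipate in your last paragraph is exactly the paper's first proof, except that the paper runs it in the opposite order: first strictify the bicategory to a $2$-category $\mathbf{B}'$ via formal strings of composable $1$-cells, then apply Isbell's construction to each $\mathbf{B}'(a,b)$, and finally define composition on the resulting formal sums of strings by distributing explicitly; doing the multiplicative strictification first is what makes the explicit distributive formula for composition clean. For the Yoneda route, your Step~1 is not needed: the paper shows that $\weakMathName(\mathbf{B}^{op},\mathbf{C})$ is a \strictName\ whenever $\mathbf{C}$ is---strictness is inherited from the \emph{codomain}, not the domain---so one Yoneda-embeds $\mathbf{B}$ directly into $\weakMathName(\mathbf{B}^{op},\mathbf{SymMon})$ and then pushes forward along a quasi-inverse to the biequivalence $\mathbf{Perm}_u\hookrightarrow\mathbf{SymMon}$, landing in the \strictName\ $\weakMathName(\mathbf{B}^{op},\mathbf{Perm}_u)$ and taking the full sub-$2$-category on representables. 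This sidesteps precisely the coherence problem for $\Phi$ that you flag as the hard part. One genuine gap in your Step~2: you must work with $\mathbf{Perm}_u$ (permutative categories and \emph{strictly unital} strong monoidal functors) rather than $\mathbf{Perm}$, since strict unitality of the components of a transformation is exactly what makes the additive unit a strict null object for composition in the functor $2$-category, as the \strictName\ axioms demand.
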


Thus the structures that arise in nature can be suitably perturbed to strutures that feed naturally into a $\bK$-theory machine to produce a spectral category. We offer two proofs of Theorem~\ref{MainThm}. The first, more explicit, argument is given in \S\ref{CohVerISect} and generalizes in a straightforward way the arguments of \cite{Isb} and \cite{MQRT}. The second proof, given in \S\ref{CohVerIISect}, follows the Yoneda approach to coherence (\cite{Lein}). The final \S\ref{SingZeroSect} discusses the strictification of \S\ref{CohVerIISect} for bimonoidal categories.

%
%
%
%

We use the language of bicategories throughout and suggest \cite{Lein} as a quick introduction to the relevant terminology. We would like to thank Mike Shulman for a number of helpful comments.

\section{\WeakNames}

In this section, we introduce the main object of study and give a number of examples.

\begin{defn}\label{weakNameDefn} An {\bf \weakName} is a bicategory $\mathbf B$ together with, for each pair $a,b$ of $0$-cells, a symmetric monoidal structure on the category ${\mathbf B}(a,b)$. These monoidal structures must be compatible in the following sense: for each triple $a,b,c$ of $0$-cells, we ask that the composition functors 
\[\circ:{\mathbf B}(b,c)\times{\mathbf B}(a,b)\to{\mathbf B}(a,c)\] 
be bilinear, in the sense that the adjoints
\[{\mathbf B}(b,c)\xrightarrow{-\circ(-)}\mathbf{CAT}({\mathbf B}(a,b),{\mathbf B}(a,c))\]
and
\[{\mathbf B}(a,b)\xrightarrow{(-)\circ-}\mathbf{CAT}({\mathbf B}(b,c),{\mathbf B}(a,c))\]
are equipped with symmetric strong monoidal structures (the functor categories inherit monoidal structures from ${\mathbf B}(a,c)$).
These conditions encode natural distributivity isomorphisms as well as natural isomorphisms exhibiting the monoidal unit objects as null objects for composition. Finally, the above data must satisfy the following conditions:
\begin{enumerate}
\item\label{bilinCond} The isomorphisms $(f\oplus g)\circ - \iso (f\circ -)\oplus (g\circ -)$, $-\circ(f\oplus g)\iso (-\circ f)\oplus (-\circ g)$, $\mathbf{0}\circ-\iso \mathbf{0}$, and $-\circ\mathbf{0}\iso\mathbf{0}$ are isomorphisms of monoidal functors
\item\label{bicatMndCond} The isomorphisms $f\circ(g\,\circ-)\iso(f\circ g)\circ-$, $(-\circ f)\circ g\iso -\circ(f\circ g)$, $f\circ(-\circ g)\iso (f\circ -)\circ g$, $-\circ\mathbf{1}\iso\mathrm{id}$, and $\mathbf{1}\circ -\iso\mathrm{id}$ are isomorphisms of monoidal functors.
\end{enumerate}
\end{defn}

\begin{rmk} The above definition can alternatively be given as follows: an \weakName\ is a bicategory $\mathbf{B}$ with symmetric monoidal structures on each $\mathbf{B}(a,b)$ such that 
\begin{enumerate}
\item The adjoints to the composition functors factor through symmetric strong monoidal functors 
\[\mathbf{B}(b,c)\to\mathbf{SymMon}(\mathbf{B}(a,b),\mathbf{B}(a,c))\]
and
\[\mathbf{B}(a,b)\to\mathbf{SymMon}(\mathbf{B}(b,c),\mathbf{B}(a,c)),\]
where $\mathbf{SymMon}(\mathbf{C},\mathbf{D})$ denotes the category of symmetric strong monoidal functors and monoidal transformations. The above specifies two monoidal structures on each of the functors $f\circ(-)$ and $(-)\circ f$, and these are required to coincide.
\item The associativitiy and unit constraints for the bicategory structure are monoidal transformations, as in condition~(\ref{bicatMndCond}) of Definition~\ref{weakNameDefn}.
\end{enumerate}
\end{rmk}

\begin{eg}\label{ringEG} 
Any ordinary ring or semiring $R$, considered as a $2$-category with a single $0$-cell and only identity $2$-cells, is an \weakName.
\end{eg}

\begin{eg} In general, \weakNames\ with a single $0$-cell are bimonoidal categories, though not in the sense of Laplaza (\cite{Lap}). Laplaza takes the multiplicative monoidal structure to be symmetric, so we shall refer to Laplaza's bimonoidal categories as symmetric bimonoidal categories, and our bimonoidal categories do not in general give examples of symmetric bimonoidal categories. On the other hand, Laplaza does not take his distributivity maps to be isomorphisms, so Laplaza's symmetric bimonoidal categories do not generally give bimonoidal categories in our sense. 

Symmetric bimonoidal categories as redefined in \cite[\S6.3]{MQRT} do give bimonoidal categories in our sense.
\end{eg}

\begin{eg} Let $\sC$ be any category with finite coproducts and fiber products. Suppose that pullbacks in $\sC$ preserve coproducts, as is the case in the category of sets or $G$-sets. Then the bicategory ${\mathbf B}=Span(\sC)$ of spans in $\sC$ is a typical example of an \weakName. The bicategory structure comes from pullbacks, and the additional monoidal structure comes from coproducts. 
\end{eg}

\begin{eg} Let $\mathbf{Mod}$ denote the bicategory whose $0$-cells are rings, whose $1$-cells are bimodules, and whose $2$-cells are bimodule maps. The horizontal composition is given by tensor products. Since tensor products preserve sums, this is an example of an \weakName.
\end{eg}

\begin{eg} Let $\mathbf{SymMon}_{}$ denote the $2$-category of symmetric monoidal categories, symmetric strong monoidal functors, and monoidal transformations. For symmetric monoidal categories $\sC$ and $\sD$, the functor category $\mathbf{SymMon}_{}(\sC,\sD)$ inherits a monoidal structure from $\sD$: the sum of strong monoidal functors $F,G:\sC\to \sD$ is defined on $c\in\sC$ by
\[(F\oplus G)(c) =F(c)\oplus G(c),  \]
and the unit object is the constant functor at the unit of $\sD$.
Note that $F\oplus G$ is a symmetric strong monoidal functor: for instance, the structure morphism is given by
\[ \begin{split}
(F\oplus G)(c_1) \oplus (F\oplus G)(c_2) &= \bigl(F(c_1)\oplus G(c_1)\bigr) \oplus \bigl( F(c_2)\oplus G(c_2) \bigr) \\
 (assoc.)\qquad\qquad  & \iso F(c_1)\oplus \biggl( \bigl( G(c_1)\oplus F(c_2) \bigr) \oplus G(c_2)\biggr)  \\
 (comm.)\qquad\qquad & \iso F(c_1)\oplus \biggl( \bigl( F(c_2)\oplus G(c_1) \bigr) \oplus G(c_2)\biggr)  \\
 (assoc.)\qquad\qquad & \iso \bigl(F(c_1)\oplus F(c_2)\bigr) \oplus \bigl( G(c_1)\oplus G(c_2) \bigr) \\
 (F, G \text{ strong monoidal})\quad 
 & \iso F(c_1\oplus c_2)\oplus G(c_1\oplus c_2) = (F\oplus G)(c_1\oplus c_2) .
\end{split} \]
This structure makes $\mathbf{SymMon}_{}$ into an \weakName.
\end{eg}

\begin{eg}\label{PermEG} Let $\mathbf{Perm}_{}$ denote the full sub-$2$-category of $\mathbf{SymMon}_{}$ consisting of permutative categories, symmetric strong monoidal functors, and monoidal transformations. This is again an \weakName. Note that for each pair $\sP_1$, $\sP_2$ of permutative categories, the category $\mathbf{Perm}_{}(\sP_1,\sP_2)$ is permutative.
\end{eg}

\begin{eg}\label{PermuEG} Let $\mathbf{Perm}_{u}$ denote the locally full sub-$2$-category of $\mathbf{Perm}_{}$ consisting of permutative categories, symmetric strong monoidal functors that are strictly unital, and monoidal transformations. This is again an \weakName. The essential point is that if $F,G:\sP_1\to\sP_2$ are strictly unital, symmetric strong monoidal functors, then so is $F\oplus G$:
\[(F\oplus G)(u_1)=F(u_1)\oplus G(u_1)=u_2\oplus u_2=u_2\]
 since $\sP_2$ is permutative.
\end{eg}

\begin{rmk} One can also consider the sub-$2$-category $\mathbf{Perm}_{\mathrm{strict}}\subset\mathbf{Perm}_{}$ consisting of permutative categories, symmetric strict monoidal functors, and monoidal transformations. This $2$-category, however, does not give an example of an \weakName \ since there is no canonical monoidal structure on the categories $\mathbf{Perm}_{\mathrm{strict}}(\sP_1,\sP_2)$: the sum of symmetric strict monoidal functors is only symmetric {\em strong} monoidal in general, as in order to identify
\[(F\oplus G)(x)\oplus (F\oplus G)(y)=F(x)\oplus G(x)\oplus F(y)\oplus G(y)\]
with
\[(F\oplus G)(x\oplus y)=F(x\oplus y)\oplus G(x\oplus y)=F(x)\oplus F(y)\oplus G(x)\oplus G(y)\]
one must use a commutativity isomorphism.

On the other hand, versions of the above examples using lax monoidal functors rather than strong monoidal functors also produce \weakNames.
\end{rmk}

We close this section with a construction that will be needed in \S\ref{CohVerIISect}. Recall that if $\mathbf{B}$ is a bicategory, there is an opposite bicategory $\mathbf{B}^{op}$ in which the composition of $1$-cells is reversed. The $0$-cells of $\mathbf{B}^{op}$ are those of $\mathbf{B}$, and 
\[\mathbf{B}^{op}(a,b)=\mathbf{B}(b,a).\]
Composition is defined using the isomorphism of categories
\[ \mathbf{B}(c,b)\times\mathbf{B}(b,a)\iso\mathbf{B}(b,a)\times\mathbf{B}(c,b).\]

\begin{prop}
If $\mathbf{B}$ has the structure of an \weakName, the bicategory $\mathbf{B}^{op}$ inherits the structure of \weakName.
\end{prop}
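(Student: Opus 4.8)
The plan is to unwind Definition~\ref{weakNameDefn} for $\mathbf{B}^{op}$ and observe that every piece of structure and every axiom is the corresponding one for $\mathbf{B}$ with the two composition variables interchanged. First I would take the symmetric monoidal structure on $\mathbf{B}^{op}(a,b):=\mathbf{B}(b,a)$ to be the one already present on $\mathbf{B}(b,a)$; there is nothing to do here.

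Next I would identify the one-sided composition functors of $\mathbf{B}^{op}$ with those of $\mathbf{B}$. For a $1$-cell $g$ of $\mathbf{B}^{op}(b,c)$, i.e.\ of $\mathbf{B}(c,b)$, the functor $g\circ(-)\colon\mathbf{B}^{op}(a,b)\to\mathbf{B}^{op}(a,c)$ computed in $\mathbf{B}^{op}$ is, by the definition of $\mathbf{B}^{op}$, the functor $(-)\circ g\colon\mathbf{B}(b,a)\to\mathbf{B}(c,a)$ in $\mathbf{B}$; dually, for a $1$-cell $f$ of $\mathbf{B}^{op}(a,b)=\mathbf{B}(b,a)$ the functor $(-)\circ f$ in $\mathbf{B}^{op}$ is $f\circ(-)$ in $\mathbf{B}$. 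I would therefore transport symmetric strong monoidal structures along these identities: the structure on $g\circ(-)$ in $\mathbf{B}^{op}$ is declared to be the one $\mathbf{B}$ supplies on $(-)\circ g$, and the structure on $(-)\circ f$ in $\mathbf{B}^{op}$ is the one $\mathbf{B}$ supplies on $f\circ(-)$. With this choice the distributivity isomorphisms $(f\oplus g)\circ-\iso(f\circ-)\oplus(g\circ-)$, $-\circ(f\oplus g)\iso(-\circ f)\oplus(-\circ g)$ and the nullity isomorphisms $\mathbf{0}\circ-\iso\mathbf{0}$, $-\circ\mathbf{0}\iso\mathbf{0}$ for $\mathbf{B}^{op}$ are literally those of $\mathbf{B}$ with the sides swapped, so condition~(\ref{bilinCond}) for $\mathbf{B}^{op}$ is precisely condition~(\ref{bilinCond}) for $\mathbf{B}$.

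For condition~(\ref{bicatMndCond}) I would recall that the associativity and unit constraints of $\mathbf{B}^{op}$ are built from those of $\mathbf{B}$ by reversing the order of the composites: the associator of $\mathbf{B}^{op}$ at $(f,g,h)$ is the inverse of the associator of $\mathbf{B}$ at $(h,g,f)$, and the left and right unitors are exchanged. Since the property of being an isomorphism of monoidal functors is closed under taking inverses and under the swap isomorphism of product categories, all five $2$-cells listed in condition~(\ref{bicatMndCond}) remain monoidal for $\mathbf{B}^{op}$, using the transported monoidal structures above together with the fact, recorded in the Remark following Definition~\ref{weakNameDefn}, that for each $1$-cell $f$ the two monoidal structures placed on $f\circ(-)$ agree, and likewise the two on $(-)\circ f$.

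The argument is pure bookkeeping; the one place I expect to have to be careful in a full writeup is matching variances, that is, verifying that the coherence data witnessing that, say, $f\circ(-)$ is strong monoidal in $\mathbf{B}$ becomes exactly the data witnessing that the $\mathbf{B}^{op}$-functor $(-)\circ f$ is strong monoidal, rather than an ``opposite'' version of it. I do not anticipate any genuine obstruction.
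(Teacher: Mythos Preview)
Your proposal is correct and is exactly the natural argument: transport the symmetric monoidal structures along the identity $\mathbf{B}^{op}(a,b)=\mathbf{B}(b,a)$, identify the one-sided composition functors of $\mathbf{B}^{op}$ with the opposite-sided ones of $\mathbf{B}$, and observe that the coherence conditions~(\ref{bilinCond}) and~(\ref{bicatMndCond}) are symmetric under this swap. The paper itself does not supply a proof of this proposition---it is simply stated and left to the reader---so there is nothing to compare against beyond noting that your bookkeeping is the intended one.
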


\section{\strictNames}

Here we introduce the strict versions of \weakNames\ and give a few examples.

\begin{defn} A {\bf \strictName} is a $2$-category $\mathbf C$ together with, for each pair $c,d$ of $0$-cells, a structure of permutative category on ${\mathbf C}(c,d)$. The functors 
\[{\mathbf C}(b,c)\xrightarrow{-\circ(-)}\mathbf{CAT}({\mathbf C}(a,b),{\mathbf C}(a,c))\]
and
\[{\mathbf C}(a,b)\xrightarrow{(-)\circ-}\mathbf{CAT}({\mathbf C}(b,c),{\mathbf C}(a,c))\]
are now functors between permutative categories, and we require a symmetric strict monoidal structure on the functor $-\circ(-)$ and a symmetric strong monoidal, but strictly unital, structure on the functor $(-)\circ-$. The coherence conditions (\ref{bilinCond}) and (\ref{bicatMndCond}) of Definition~\ref{weakNameDefn} are again required to hold.
\end{defn}

Since many of the structure maps are now identities, a number of the coherence conditions hold automatically. Letting $\delta:f\circ(g\oplus h)\iso f\circ g\oplus f\circ h$ be the left distributivity isomorphism, the conditions that are not automatic are as follows:
\renewcommand{\themyCounter}{\roman{myCounter}}  
\begin{list}{(\roman{myCounter})}{\usecounter{myCounter}}
\item\label{PentCond} For any $f_1, f_2, g_1$, and $g_2$, the following diagram commutes
\[\xymatrix  @C=-7em  {
(f_1 \oplus f_2)\circ (g_1 \oplus g_2) \ar@{=}[rr] \ar[d]^{\delta} & & (f_1\circ(g_1\oplus g_2))\oplus (f_2\circ(g_1\oplus g_2)) \ar[d]^{\delta\oplus\delta} \\
\bigl((f_1\oplus f_2)\circ g_1\bigr)\oplus \bigl((f_1\oplus f_2)\circ g_2\bigr) \ar@{=}[dr] & &  (f_1\circ g_1)\oplus (f_1\circ g_2) \oplus (f_2\circ g_1)\oplus (f_2\circ g_2) \ar[dl]^{1\oplus\gamma\oplus1} \\
 & (f_1\circ g_1)\oplus (f_2\circ g_1)\oplus (f_1\circ g_2)\oplus (f_2\circ g_2) 
}\]
\item For any $f$ and $g$, the isomorphism $\delta:\mathbf{0}\circ(f\oplus g)\iso \mathbf{0}\circ f \oplus \mathbf{0}\circ g$ is the idenity map of $\mathbf{0}$
\item For any $f$, $g$, $h_1$, and $h_2$, the following diagram commutes
\[\xymatrix{ f\circ g\circ (h_1\oplus h_2) \ar[rr]^\delta \ar[dr]_{1\cdot \delta} & & f\circ g\circ h_1\oplus f\circ g\circ h_2 \\  & f\circ (g\circ h_1\oplus g\circ h_2) \ar[ur]_{\delta}
}\]
\item For any $f$, $g_1$, $g_2$, and $h$, the following diagram commutes
\[\xymatrix{ f\circ (g_1\oplus g_2)\circ h \ar@{=}[r] \ar[d]_{\delta\cdot 1} & f\circ (g_1\circ h \oplus g_2\circ h) \ar[d]^\delta \\  (f \circ g_1\oplus f\circ g_2)\circ h \ar@{=}[r] & f\circ g_1\circ h \oplus f\circ g_2\circ h
}\]
\item For any $f$ and $g$, the isomorphism $\delta:\mathbf{1}\circ (f\oplus g)\iso \mathbf{1}\circ f\oplus \mathbf{1}\circ g$ is the identity map of $f\oplus g$.
\end{list}

As in the theory of bipermutative categories, the diagram in condition (\ref{PentCond}) above shows that it is unreasonable to take both distributivity conditions to hold strictly unless  commutativity also holds strictly. 

\begin{eg} Any ring $R$, considered as a $2$-category as in Example~\ref{ringEG}, gives an example of a \strictName. Indeed, the only $2$-cells are identity maps, and so all conditions are trivially satisfied.
\end{eg}

\begin{eg} Bipermutative categories, as specified originally in \cite{MQRT}, are examples of single $0$-cell \strictNames, although the multiplicative structure is assumed to be commutative. The version of bipermutative categories specified in \cite[Definition~3.6]{EM} do not give single $0$-cell \strictNames\ as those authors do not require the distributivity maps to be isomorphisms.  The ring categories of \cite[Definition~3.3]{EM} are closer to singe $0$-cell \strictNames, although again the authors do not take the distributivity maps to be isomorphisms. Single $0$-cell \strictNames\ thus give examples of ring categories, but not conversely.
\end{eg}

\begin{eg} The $2$-category $\mathbf{Perm}_{u}$ of Example~\ref{PermEG} is a \strictName. For strict monoidal functors $\xymatrix {\sP_1 \ar[r]^F & \sP_2 \ar@<.5ex>[r]^G \ar@<-.5ex>[r]_H & \sP_3  }$, we have
\[((H\oplus G)\circ F)(x)=HF(x)\oplus GF(x)=(HF\oplus HG)(x)\]
and
\[(u_3\circ F)(x)=u_3,\]
so $-\circ(-)$ is strict symmetric monoidal. On the other hand, given
$\xymatrix{ \sP_1 \ar@<.5ex>[r]^F \ar@<-.5ex>[r]_G & \sP_2 \ar[r]^H & \sP_3}$, we have
\[(H\circ(F\oplus G))(x)=H(F(x)\oplus G(x))\iso HF(x)\oplus HG(x)=(HF\oplus HG)(x)\]
and
\[(H\circ u_2)(x)=H(u_2)=u_3,\]
so $(-)\circ -$ is strictly unital and strong monoidal. The coherence conditions are easily verified.
Although the choice of which distributivity law to make strict is arbitrary, this example provides a good reason for the choice taken here (and in \cite{MQRT}).
\end{eg}

\section{The $3$-category of \weakNames}

In order to give the approach to coherence via the Yoneda embedding, we must first describe the $3$-category of \weakNames.

\begin{defn} Let $\mathbf{B}$ and $\mathbf{C}$ be \weakNames. An {\bf \weakFunctor} $F:\mathbf{B}\to\mathbf{C}$ is a (strong) functor\footnote{These also go by the name of homomorphism or pseudo-functor.} of bicategories such that each functor $F:\mathbf{B}(a,b)\to\mathbf{C}(F(a),F(b))$ is (strong) symmetric monoidal. Moreover, the natural transformations

\centerline{ \xymatrix {
 \mathbf{B}(b,c) \ar[r]^(.33){-\circ(-)} \ar[d]_F & \mathbf{CAT}(\mathbf{B}(a,b),\mathbf{B}(a,c)) \ar[r]^(.45){F\circ} \ar@{}[d]|{\Downarrow} & \mathbf{CAT}(\mathbf{B}(a,b),\mathbf{C}(Fa,Fc)) \\
 \mathbf{C}(Fb,Fc) \ar[rr]_(.43){-\circ(-)}  & &  \mathbf{CAT}(\mathbf{C}(Fa,Fb), \mathbf{C}(Fa,Fc)) \ar[u]_{\circ F}
}}
\noindent and

\centerline{ \xymatrix {
 \mathbf{B}(a,b) \ar[r]^(.33){(-)\circ-} \ar[d]_F & \mathbf{CAT}(\mathbf{B}(b,c),\mathbf{B}(a,c)) \ar[r]^(.45){F\circ} \ar@{}[d]|{\Downarrow} & \mathbf{CAT}(\mathbf{B}(b,c),\mathbf{C}(Fa,Fc)) \\
 \mathbf{C}(Fa,Fb) \ar[rr]_(.43){-\circ(-)}  & &  \mathbf{CAT}(\mathbf{C}(Fb,Fc), \mathbf{C}(Fa,Fc)) \ar[u]_{\circ F}
}}
\noindent are required to be monoidal transformations.
\end{defn}

\begin{defn} Given \weakNames\ $\mathbf{B}$ and $\mathbf{C}$ and \weakFunctors\ $F,G:\mathbf{B}\to\mathbf{C}$, a (strong) {\bf monoidal transformation} $\eta:F\Rightarrow G$ is a (strong) transformation in the sense of bicategories such that for each $a,b\in\mathbf{B}$, the natural transformation

\centerline{ \xymatrix @R=2ex {
 & \mathbf{C}(Ga,Gb) \ar[dr]^{\circ \eta_a} \ar@{}[dd]|{\Downarrow}_\eta \\
 \mathbf{B}(a,b) \ar[ur]^{G} \ar[dr]_F & & \mathbf{C}(Fa,Gb) \\
  & \mathbf{C}(Fa,Fb) \ar[ur]_{\eta_b\circ}
}}
\noindent is a monoidal transformation in the usual sense.
\end{defn}

\begin{eg} Let $R$ and $S$ be rings, considered as \weakNames\ as in Example~\ref{ringEG}. Then \weakFunctors\ $F:R\to S$ correspond to ring homomorphisms. Given two such $F$ and $G$, a monoidal transformation $\eta:F\Rightarrow G$ is given by an element $s\in S$ such that $sF(r)=G(r)s$ for all $r\in R$. In particular, monoidal transformations $F\Rightarrow F$ correspond to centralizers of $F(R)$ in $S$.
\end{eg}

\begin{defn}
Given \weakNames\ $\mathbf{B}$ and $\mathbf{C}$, \weakFunctors\ $F,G:\mathbf{B}\to\mathbf{C}$, and monoidal transformations $\eta,\sigma:F\Rightarrow G$, a {\bf modification} $M:\eta\Rrightarrow\sigma$ is simply a modification in the sense of bicategories.
\end{defn} 

\begin{defn} Given \weakNames\ $\mathbf{B}$ and $\mathbf{C}$, denote by $\weakMathName(\mathbf{B},\mathbf{C})$ the bicategory of \weakFunctors, strong monoidal transformations, and modifications.
\end{defn}

For any \weakNames\ $\mathbf{B}$ and $\mathbf{C}$, there is a canonical functor of bicategories 
\begin{equation}\weakMathName(\mathbf{B},\mathbf{C})\longrightarrow \mathbf{Bicat}(\mathbf{B},\mathbf{C}),\label{LocalEquiv}\end{equation}
which is locally full and faithful.

\begin{defn} Let $\mathbf{B}$ and $\mathbf{C}$ be \weakNames. A {\bf biequivalence} from $\mathbf{B}$ to $\mathbf{C}$ consists of a pair of \weakFunctors\ $F:\mathbf{B}\rightleftarrows\mathbf{C}:G$ together with an equivalence $FG\simeq 1_{\mathbf{C}}$ in $\weakMathName(\mathbf{C},\mathbf{C})$ and an equivalence $1_{\mathbf{B}}\simeq GF$ in $\weakMathName(\mathbf{B},\mathbf{B})$.
\end{defn}

As one might expect, there is the following alternative characterization of biequivalences.

\begin{prop} An \weakFunctor\ $F:\mathbf{B}\to\mathbf{C}$ of \weakNames\ is a biequivalence if and only if it is a local equivalence and is surjective-up-to-equivalence on $0$-cells, meaning that every $0$-cell in $\mathbf{C}$ is equivalent to some $0$-cell $F(b)$.
\end{prop}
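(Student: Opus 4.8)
The statement is the standard "biequivalence = local equivalence + essential surjectivity on objects" fact, lifted from bicategories to \weakNames, so the plan is to reduce to the bicategorical version and then check that the extra monoidal data comes along for free. In one direction, if $F:\mathbf{B}\to\mathbf{C}$ is a biequivalence of \weakNames, then by definition there is an \weakFunctor\ $G$ and equivalences $FG\simeq 1_{\mathbf{C}}$, $1_{\mathbf{B}}\simeq GF$ in the relevant $\weakMathName$-bicategories. Applying the locally full and faithful functor (\ref{LocalEquiv}) to $\mathbf{Bicat}$, these become equivalences in $\mathbf{Bicat}$, so $F$ is a biequivalence of the underlying bicategories; it is then classical that a biequivalence of bicategories is a local equivalence and essentially surjective on $0$-cells, which is exactly what we want.

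For the converse — the substantive direction — I would follow the usual construction of a pseudo-inverse and then promote it. Suppose $F$ is a local equivalence and surjective-up-to-equivalence on $0$-cells. First, for each $0$-cell $c$ of $\mathbf{C}$ choose a $0$-cell $G(c)$ of $\mathbf{B}$ together with an equivalence $\theta_c:FG(c)\xrightarrow{\simeq}c$ in $\mathbf{C}(FG(c),c)$ (using the axiom of choice and essential surjectivity). On hom-categories, since $F:\mathbf{B}(G(c),G(d))\to\mathbf{C}(FG(c),FG(d))$ is an equivalence of categories, pick a pseudo-inverse $\widetilde F_{c,d}$ and define $G$ on $1$- and $2$-cells by conjugating with the $\theta$'s, i.e. $G(\phi) = \widetilde F_{c,d}\bigl(\theta_d^{-1}\circ \phi\circ \theta_c\bigr)$ in the usual way; the compositor and unitor of $G$ are forced by the coherence isomorphisms of $\mathbf{B}$ and the chosen adjoint equivalences, giving a (strong) functor of bicategories $G$. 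This is the standard construction (as in \cite{Lein}), and it yields equivalences $FG\simeq 1_{\mathbf{C}}$ and $1_{\mathbf{B}}\simeq GF$ in $\mathbf{Bicat}$.

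It remains to see that $G$ is an \weakFunctor\ and that the two equivalences live in $\weakMathName$ rather than merely $\mathbf{Bicat}$. Here the key observation is that $F:\mathbf{B}(G(c),G(d))\to\mathbf{C}(FG(c),FG(d))$ is an equivalence of categories which is moreover symmetric strong monoidal; a monoidal functor that is an equivalence of underlying categories is a (symmetric) monoidal equivalence, so any chosen pseudo-inverse $\widetilde F_{c,d}$ acquires a canonical symmetric strong monoidal structure making it a monoidal pseudo-inverse. Likewise the chosen equivalences $\theta_c$ can be transported through the monoidal structures, since composition with a fixed $1$-cell is (strong) monoidal by the definition of an \weakName; more precisely, $\theta_d^{-1}\circ(-)\circ\theta_c$ is a symmetric strong monoidal equivalence, and composing monoidal equivalences with the monoidal $\widetilde F_{c,d}$ produces the symmetric strong monoidal functors $G:\mathbf{B}(c,d)\to\mathbf{C}(Gc,Gd)$. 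Finally one must verify that $G$ interacts correctly with the bilinearity data — i.e. that the pasting diagrams in the definition of an \weakFunctor\ are monoidal transformations — and that the unit/counit equivalences are monoidal transformations; each of these is a diagram chase using that all the $2$-cells involved (compositors of $F$ and $G$, the $\theta_c$, the triangle identities) are already monoidal, so no new coherence is needed. The main obstacle is organizational rather than conceptual: keeping track of the many structure isomorphisms and confirming that the bilinearity and unit constraints really do pass through the conjugation, but since every ingredient is a monoidal equivalence the checks are routine. I would therefore present the converse by invoking the bicategorical construction of $G$ and then remarking that "monoidal" can be carried along at each step because an equivalence of categories underlying a symmetric monoidal functor is automatically a monoidal equivalence.
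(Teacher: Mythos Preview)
The paper states this proposition without proof, treating it as the expected analogue of the classical bicategorical fact; there is nothing in the paper to compare your argument against. Your proposal is correct and is exactly the standard argument one would give: reduce the forward direction to the underlying bicategories via the locally full and faithful comparison (\ref{LocalEquiv}), and for the converse build the pseudo-inverse $G$ by the usual choice-of-equivalences-and-local-inverses construction, then observe that every step upgrades to the monoidal setting because a symmetric strong monoidal functor which is an equivalence of underlying categories is automatically a monoidal equivalence, and because pre- and post-composition by a fixed $1$-cell are strong monoidal by the \weakName\ axioms. The only point worth saying more carefully is that the equivalences $FG\simeq 1_{\mathbf C}$ and $1_{\mathbf B}\simeq GF$ must be checked to be \emph{monoidal} transformations in the sense of the paper, i.e.\ that the naturality $2$-cells are monoidal; you gesture at this but it is the one place where a reader might want the diagram written out.
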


\section{Coherence for \weakNames, version $I$}\label{CohVerISect}


The first proof of Theorem~\ref{MainThm} we offer is 
an analogue of Isbell's argument (\cite{Isb}). This is just a many-$0$-cell version of the argument in Proposition~VI.3.5 of \cite{MQRT}, dropping multiplicative commutativity.


\begin{proof}[First proof of Theorem~\ref{MainThm}] 
Let $\mathbf B$ be an \weakName. The first step is to rigidify the multiplicative structure, using the standard rigidification of bicategories to $2$-categories. Define a $2$-category $\mathbf{B}'$ to have the same $0$-cells as those of $\mathbf B$. The $1$-cells in $\mathbf{B}'(a,b)$ are defined to be formal strings 
\[ f_n\cdot f_{n-1}\cdot\dots\cdot f_1 \]
of composable $1$-cells in $\mathbf{B}(a,b)$ such that the source of $f_1$ is $a$ and the target of $f_n$ is $b$. In the case that $a=b$, we allow empty strings as well, denoted $1_a$, to serve as strict units for horizontal composition. There is a surjective function $\pi$ from the $1$-cells of $\mathbf{B}'(a,b)$ to those of $\mathbf{B}(a,b)$ defined by
\[ \pi(f_n\cdot f_{n-1}\cdot \dots\cdot f_1) = f_n\circ ( f_{n-1}\circ (f_{n-2}\circ(\dots\circ(f_2\circ f_1)\dots))),  \qquad \pi(1_a) = \mathbbm{1}_a. \]
One then defines the $2$-cells in $\mathbf{B}'$ so that this function extends to an equivalence 
\[\pi:\mathbf{B}'(a,b)\xrightarrow{\sim}\mathbf{B}(a,b).\]
Concatenation of strings makes $\mathbf{B}'$ into a $2$-category, and $\pi:\mathbf{B}'\to\mathbf{B}$ is a biequivalence. Note that there is also a canonical functor of bicategories $\eta:\mathbf{B}\to\mathbf{B}'$ which is the identity on $0$-cells and sends a $1$-cell of $\mathbf{B}$ to the singleton string in $\mathbf{B}'$. Then $\pi\eta=1_{\mathbf{B}}$ and $\eta\pi\simeq1_{\mathbf{B}'}$, so that $\eta$ is a quasi-inverse to $\pi$.

Each category $\mathbf{B}'(a,b)$ inherits a symmetric monoidal structure from that in $\mathbf{B}(a,b)$. Given a pair $f,g\in\mathbf{B}'(a,b)$ of $1$-cells, we simply define $f\oplus g:=\pi(f)\oplus\pi(g)$, considered as a singleton string. Moreover, $\mathbf{B}'$ becomes an \weakName, and $\eta$ is a biequivalence of \weakNames.

We can now perform Isbell's rigidification construction to each monoidal category $\mathbf{B}'(a,b)$. Call the resulting permutative category $\boxed{\mathbf{B}}(a,b)$.
A typical object of $\boxB(a,b)$ is a (possibly empty) formal sum
\[f_{1n_1}\cdots f_{11}+f_{2n_2}\cdots f_{21}+\dots+f_{kn_k}\cdots f_{k1}.\]
Again, there is a surjective function $\boxed{\pi}$ from the $1$-cells of $\boxed{\mathbf{B}}(a,b)$ to the $1$-cells of $\mathbf{B}'(a,b)$ defined by
\[\boxed{\pi}(\overline{g_1}+\overline{g_2}+\dots+\overline{g_n}) = \overline{g_1}\oplus(\overline{g_2}\oplus\dots\oplus(\overline{g_{n-1}}\oplus\overline{g_n})\dots), \qquad \boxed{\pi}({_a}\emptyset_b)={_a}u_b \]
for $\overline{g_i}\in\mathbf{B}'(a,b)$,
and this is used to define the $2$-cells of $\boxed{\mathbf{B}}(a,b)$. The function $\boxed{\pi}$ then extends to a strong symmetric monoidal functor $\boxed{\pi}:\boxed{\mathbf{B}}(a,b)\to\mathbf{B}'(a,b)$.

We claim that $\boxB$ is a \strictName \ (the $0$-cells are the same as those of $\mathbf{B}$ and $\mathbf{B}'$). By construction, each $\boxB(a,b)$ is a permutative category. The composition functor
\[\boxed{\circ}:\boxB(b,c)\times\boxB(a,b)\to\boxB(a,c)\]
is defined on $1$-cells by
\[\begin{split}
 &(f_{1n_1}\cdots f_{11}+f_{2n_2}\cdots f_{21}+\dots+f_{kn_k}\cdots f_{k1})  \quad \boxed{\circ} \\
& \qquad \qquad (g_{1m_1}\cdots g_{11}+g_{2m_2}\cdots g_{21}+\dots+g_{lm_l}\cdots g_{l1}) :=   \\
&  f_{1n_1}\cdots f_{11} g_{1m_1}\cdots g_{11} + f_{1n_1}\cdots f_{11} g_{2m_2}\cdots g_{21} + \dots + f_{kn_k}\cdots f_{k1} g_{lm_l}\cdots g_{l1}, \\
& \qquad\qquad\qquad\qquad {_b}\emptyset_c \ \boxed{\circ} \ \alpha := {_a}\emptyset_c, \qquad\qquad\qquad \beta \ \boxed{\circ} \ {_b}\emptyset_a :={_c}\emptyset_a .
\end{split}\]
Finally, the isomorphisms 
\[\boxed{\pi}\bigl( (\overline{g_1}+\dots+\overline{g_n})\boxed{\circ}(\overline{f_1}+\dots+\overline{f_k}) \bigr)\iso \boxed{\pi}(\overline{g_1}+\dots+\overline{g_n})\circ \boxed{\pi}( \overline{f_1}+\dots+\overline{f_k}) \]
are used to define $\boxed{\circ}$ on the level of $2$-cells (if one of the source or target $1$-cells is of the form $\emptyset$, then the nullity isomorphism ${_b}u_c\circ\pi(\overline{f})\iso{_a}u_c$ must be used).

We leave it to the reader to verify the desired properties of the composition functors $-\circ(-)$ and $(-)\circ -$ but mention that the failure of left distributivity to be strict comes from the use of a commutativity isomorphism in $\boxed{\mathbf{B}}(a,b)$.

The functor $\boxed{\pi}:\boxB\to\mathbf{B}'$ is a biequivalence of \weakNames\ with quasi-inverse $\boxed{\eta}$, where $\boxed{\eta}$ is the identity on $0$-cells and sends a $1$-cell to itself, considered as a singleton sum. We then have biequivalences of \weakNames\ 
\[ \xymatrix{
\mathbf{B} \ar@<.4ex>[r]^{\eta} & \mathbf{B}' \ar@<0.4ex>[l]^{\pi} \ar@<0.4ex>[r]^{\boxed{\eta}} & \boxB \ar@<0.4ex>[l]^{\boxed{\pi}}.
}\]
\end{proof}

\section{Coherence for symmetric monoidal categories}




The classical strictification of (symmetric) monoidal categories fits into our picture as the following result, which will be needed later.

\begin{prop} The inclusion \weakFunctor\ $\iota:\mathbf{Perm}_{}\into\mathbf{SymMon}_{}$ is a biequivalence of \weakNames.
\end{prop}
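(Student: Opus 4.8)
The plan is to exhibit the data of a biequivalence in the sense of the definition given above: a pair of \weakFunctors\ $\iota:\mathbf{Perm}\rightleftarrows\mathbf{SymMon}:S$ together with the appropriate equivalences $\iota S\simeq 1$ and $1\simeq S\iota$ in the relevant functor bicategories. Since $\iota$ is already given as the inclusion, and it is clearly locally full and faithful (indeed locally an equivalence, since every symmetric monoidal category is equivalent to a permutative one by classical strictification, cf.\ \cite{Isb}), by the alternative characterization of biequivalences it suffices to check that $\iota$ is a local equivalence and surjective-up-to-equivalence on $0$-cells. Surjectivity-up-to-equivalence on $0$-cells is exactly the statement that every symmetric monoidal category is equivalent, as a symmetric monoidal category, to a permutative one; this is the classical coherence theorem. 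Locally, $\iota$ induces on hom-categories the identity functor $\mathbf{Perm}(\sP_1,\sP_2)\to\mathbf{SymMon}(\iota\sP_1,\iota\sP_2)$ between full subcategories of functor categories, which is fully faithful by construction and essentially surjective again by classical coherence (any symmetric strong monoidal functor between permutative categories is one we are already counting). So at the level of the underlying \emph{bicategory} structure this is routine.

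The content specific to the \weakName\ structure is that $\iota$ should be promoted to an \weakFunctor, i.e.\ that the local equivalences are symmetric strong monoidal and that the two comparison $2$-cells built from the composition functors are monoidal transformations. Here the key observation is that the inclusion is \emph{strictly} compatible with the additive monoidal structures: the sum $F\oplus G$ of symmetric strong monoidal functors between permutative categories was defined pointwise by $(F\oplus G)(c)=F(c)\oplus G(c)$ with the structure isomorphism assembled from associativity and commutativity in the target, and this is literally the same formula used to define $\oplus$ on $\mathbf{SymMon}$-hom-categories. Thus $\iota:\mathbf{Perm}(\sP_1,\sP_2)\to\mathbf{SymMon}(\sP_1,\sP_2)$ is not merely strong but \emph{strict} symmetric monoidal (it is a full monoidal inclusion), and the monoidal-transformation conditions on the comparison cells for $\iota$ reduce to conditions that already hold in $\mathbf{SymMon}$ because they hold after applying the faithful functor $\iota$. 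So I would phrase the argument as: (1) recall classical strictification to get essential surjectivity on $0$-cells and local essential surjectivity; (2) note local full-faithfulness is immediate from $\mathbf{Perm}$ being a (locally) full sub-$2$-category; (3) observe that $\iota$ preserves all the \weakName\ data strictly, so it is an \weakFunctor; (4) invoke the Proposition characterizing biequivalences of \weakNames.

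One genuine subtlety to address: the characterization ``local equivalence $+$ surjective-up-to-equivalence on $0$-cells $\Rightarrow$ biequivalence'' is stated for \weakFunctors, so to apply it I need the candidate inverse $S$ (send a symmetric monoidal category to its strictification, a symmetric strong monoidal functor to its strictification) to itself carry an \weakFunctor\ structure, and I need the unit/counit equivalences to live in $\weakMathName$ rather than merely in $\mathbf{Bicat}$. The cleanest route is to avoid constructing $S$ by hand and instead prove a general lemma: an \weakFunctor\ which is a biequivalence of underlying bicategories and which is ``locally strong monoidal'' automatically admits an inverse in $\weakMathName$, because one can transport the symmetric monoidal structures along the chosen adjoint equivalences on hom-categories; the coherence conditions (\ref{bilinCond}) and (\ref{bicatMndCond}) are preserved under such transport since they are equations between invertible $2$-cells. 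This transport lemma is, I expect, the main obstacle — not because it is deep, but because verifying that all the distributivity and nullity isomorphisms transport coherently requires care with the bookkeeping, much as in the first proof of Theorem~\ref{MainThm}. Once that lemma is in hand (or once one simply cites that $\iota$ being a local equivalence on \weakNames\ suffices), the Proposition finishes the argument.
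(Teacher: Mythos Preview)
Your approach is correct and matches the paper's: apply the characterization of biequivalences (local equivalence plus surjectivity-up-to-equivalence on $0$-cells), with Isbell's theorem supplying the latter. The paper's proof is two lines because $\mathbf{Perm}$ is by definition a \emph{full} sub-$2$-category, so $\mathbf{Perm}(\sP_1,\sP_2)=\mathbf{SymMon}(\iota\sP_1,\iota\sP_2)$ on the nose and no separate local-essential-surjectivity argument is needed; your ``transport lemma'' worry is likewise unnecessary, since the characterization Proposition you already invoke in step~(4) is exactly the statement that the inverse and the unit/counit equivalences can be chosen in $\weakMathName$ --- that is its content, not an additional obligation.
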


\begin{proof} 
Given $\sP_1,\sP_2\in\mathbf{Perm}$, we have $\mathbf{Perm}(\sP_1,\sP_2)=\mathbf{SymMon}(\iota\sP_1,\iota\sP_2)$, so it remains to show that every $\sC\in\mathbf{SymMon}$ is equivalent (in $\mathbf{SymMon}$) to some $\iota(\sP)$. But this is precisely Isbell's result that every symmetric monoidal category is monoidally equivalent to a permutative category.

\renewcommand{\qedsymbol}{$\blacksquare$}
\end{proof}


\begin{prop} The inclusion \weakFunctor\ $\mathbf{Perm}_{u}\into\mathbf{Perm}_{}$ is a biequivalence of \weakNames.
\end{prop}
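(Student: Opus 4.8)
The plan is to show that the inclusion $\mathbf{Perm}_u\hookrightarrow\mathbf{Perm}$ is a local equivalence and is the identity (hence surjective-up-to-equivalence) on $0$-cells, and then invoke the characterization of biequivalences of \weakNames. The $0$-cell statement is immediate: $\mathbf{Perm}_u$ and $\mathbf{Perm}$ have the same $0$-cells, namely all permutative categories. So the content is the local equivalence: for each pair $\sP_1,\sP_2$ of permutative categories, the inclusion
\[\mathbf{Perm}_u(\sP_1,\sP_2)\hookrightarrow\mathbf{Perm}(\sP_1,\sP_2)\]
should be an equivalence of (symmetric monoidal) categories. Since this inclusion is visibly full and faithful — a monoidal transformation between strictly unital functors is the same thing as a monoidal transformation between them as general strong monoidal functors — it suffices to show it is essentially surjective: every symmetric strong monoidal functor $F:\sP_1\to\sP_2$ is monoidally isomorphic to a strictly unital one.

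The key step is therefore the following strictification-of-the-unit argument. Given a symmetric strong monoidal $F:\sP_1\to\sP_2$ with unit constraint $\lambda:u_2\xrightarrow{\sim}F(u_1)$, I would define $\widehat F:\sP_1\to\sP_2$ by setting $\widehat F(x)=F(x)$ for $x\neq u_1$ and $\widehat F(u_1)=u_2$, and on morphisms modifying only those whose source or target is $u_1$ by composing with $\lambda^{\pm1}$. One then transports the multiplication constraint $\phi_{x,y}:F(x)\oplus F(y)\xrightarrow{\sim}F(x\oplus y)$ along these isomorphisms, using that $u_2$ is a strict unit in the permutative category $\sP_2$ so that $\widehat F(u_1)\oplus\widehat F(y)=u_2\oplus F(y)=F(y)$ on the nose; the resulting constraint $\widehat\phi$ is automatically strictly unital, i.e. $\widehat\phi_{u_1,y}$ and $\widehat\phi_{x,u_1}$ are identities. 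Verifying that $\widehat F$ is again symmetric strong monoidal (associativity hexagon/pentagon and the symmetry axiom) is a diagram chase that reduces, case by case on whether the arguments equal $u_1$, to the coherence axioms for $F$ together with the fact that all the unit isomorphisms of $\sP_2$ are identities. The canonical natural isomorphism $\eta:F\xRightarrow{\sim}\widehat F$ — the identity away from $u_1$ and $\lambda$ at $u_1$ — is then a monoidal transformation by construction, so $F\cong\widehat F$ in $\mathbf{Perm}(\sP_1,\sP_2)$ with $\widehat F\in\mathbf{Perm}_u(\sP_1,\sP_2)$.

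Finally, I would note that this construction is compatible with the symmetric monoidal structure on the functor categories: the equivalence is not just an equivalence of categories but of symmetric monoidal categories, which is what is needed for a biequivalence of \weakNames\ rather than merely of bicategories. This is essentially forced, since the monoidal structure on $\mathbf{Perm}(\sP_1,\sP_2)$ is computed pointwise from $\sP_2$ and the unit is already the constant functor at $u_2$, which lies in $\mathbf{Perm}_u$; one checks that $\widehat{F\oplus G}\cong\widehat F\oplus\widehat G$ compatibly with the $\eta$'s, but this is routine. I expect the only genuine obstacle to be bookkeeping discipline in the unit-strictification diagram chase — keeping straight the several cases according to which arguments are the unit object — rather than any conceptual difficulty; everything downstream is formal given the characterization of biequivalences and Example~\ref{PermuEG}.
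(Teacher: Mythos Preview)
Your proposal is correct and follows essentially the same route as the paper: reduce to local essential surjectivity via the characterization of biequivalences, then strictify the unit of a given strong monoidal $F$ by redefining it at $u_1$ and transporting the constraints, with the obvious componentwise isomorphism witnessing $F\cong\widehat F$. Two small remarks: your $\eta:F\Rightarrow\widehat F$ should have component $\lambda^{-1}$ rather than $\lambda$ at $u_1$ (the paper orients it the other way, $\Psi\Rightarrow\Phi$), and your final paragraph on monoidal compatibility of the local equivalences is unnecessary once you invoke the characterization of biequivalences of \weakNames, which only asks for local equivalences of categories.
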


\begin{proof} The inclusion is locally full and faithful and is a bijection on $0$-cells, so it remains only to show that it is locally essentially surjective. That is, given a strong symmetric monoidal functor $\Phi:\sP_1\to\sP_2$, we must provide a strictly unital strong monoidal functor $\Psi:\sP_1\to\sP_2$ and a monoidal isomorphism $\eta:\Psi\iso\Phi$. Define $\Psi$ by
\[\Psi(x)=\left\{\begin{array}{ll}u_2 & x=u_1 \\ \Phi(x) & x\neq u_1.\end{array}\right. \]
Given a morphism $x\xrightarrow{f}y$ in $\sP_1$, $\Psi(f)=\Phi(f)$ if neither $x$ nor $y$ is $u_1$, and if either (or both) is $u_1$, then one must compose or precompose with the given isomorphism $\Phi_u:u_2\iso\Phi(u_1)$.

The functor $\Psi:\sP_1\to\sP_2$ is certainly strictly unital. The structure morphism $\Psi(x)\oplus\Psi(y)\iso\Psi(x\oplus y)$ is that of $\Phi$ if $x\neq u_1\neq y$. If $x=u_1$, we take
\[\Psi(u_1)\oplus\Psi(y)=u_2\oplus\Psi(y)=\Psi(y)=\Psi(u_1\oplus y)\]
and similarly if $y=u_1$. The coherence condition for $\Psi$, namely that for all $x,y,z\in\sP_1$, the diagram

\centerline {\xymatrix {
\Psi(x)\oplus\Psi(y)\oplus\Psi(z) \ar[r] \ar[d] & \Psi(x\oplus y)\oplus \Psi(z) \ar[d] \\
\Psi(x)\oplus\Psi(y\oplus z) \ar[r] & \Psi(x\oplus y\oplus z)
}}

\noindent commutes, is then easily verified.

Finally, it remains to specify a monoidal isomorphism $\eta:\Psi\iso\Phi$. For the component of $\eta$ at $x\in\sP_1$, we take either $\Phi_u$ or $1_{\Phi(x)}$, depending on whether $x=u_1$ or not. Because of the way in which we defined $\Psi$ on morphisms, this specifies a natural isomorphism. That $\eta$ is monoidal now follows from the unit coherence conditions for $\Phi$.
\end{proof}

Combining these two results gives the following.

\begin{cor} The inclusion \weakFunctor\ $\mathbf{Perm}_u\into\mathbf{SymMon}$ is a biequivalence of \weakNames.
\end{cor}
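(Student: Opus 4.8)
The plan is to deduce the corollary formally from the two preceding propositions, observing that the inclusion $\iota_u:\mathbf{Perm}_u\into\mathbf{SymMon}$ is exactly the composite of the inclusion \weakFunctors\ $\iota':\mathbf{Perm}_u\into\mathbf{Perm}$ and $\iota:\mathbf{Perm}\into\mathbf{SymMon}$, together with the fact that biequivalences of \weakNames\ are closed under composition. First I would note that a composite of \weakFunctors\ is again an \weakFunctor: the underlying strong functors of bicategories compose, the symmetric monoidal structures on the local functors compose, and the two required conditions (that the comparison natural transformations be monoidal) are preserved, since a whiskering of a monoidal transformation by an \weakFunctor\ is again monoidal. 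In particular $\iota_u=\iota\circ\iota'$ in the $3$-category of \weakNames.

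Next I would invoke the characterization proved above: an \weakFunctor\ is a biequivalence if and only if it is a local equivalence and is surjective-up-to-equivalence on $0$-cells. Both properties are manifestly stable under composition. A composite of equivalences of hom-categories $\mathbf{Perm}_u(\sP_1,\sP_2)\to\mathbf{Perm}(\sP_1,\sP_2)\to\mathbf{SymMon}(\sP_1,\sP_2)$ is an equivalence; and if every $0$-cell of $\mathbf{SymMon}$ is equivalent to one of the form $\iota(\sP)$ while every $0$-cell of $\mathbf{Perm}$ equals $\iota'(\sP')$ for some $\sP'\in\mathbf{Perm}_u$, then every $0$-cell of $\mathbf{SymMon}$ is equivalent to one of the form $\iota_u(\sP')$, an equivalence of $0$-cells being composable with another. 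Since the two previous propositions assert precisely that $\iota'$ and $\iota$ are biequivalences, their composite $\iota_u$ is a local equivalence that is surjective-up-to-equivalence on $0$-cells, hence a biequivalence.

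Equivalently, one could assemble the biequivalence data by hand: choosing quasi-inverses $G:\mathbf{Perm}\to\mathbf{Perm}_u$ to $\iota'$ and $H:\mathbf{SymMon}\to\mathbf{Perm}$ to $\iota$, together with the corresponding equivalences in the relevant functor bicategories, one checks that $G\circ H$ is a quasi-inverse to $\iota_u$ by pasting these equivalences and using functoriality of $\weakMathName(-,-)$ in each variable: $\iota_u\circ G\circ H=\iota\circ\iota'\circ G\circ H\simeq\iota\circ H\simeq 1_{\mathbf{SymMon}}$, and symmetrically $G\circ H\circ\iota_u\simeq 1_{\mathbf{Perm}_u}$. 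I do not anticipate any genuine obstacle here; the statement is a formal corollary, and the only points needing a line of justification are that a composite of \weakFunctors\ is an \weakFunctor\ and that the two characterizing properties of biequivalences compose, both of which are routine.
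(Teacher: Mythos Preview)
Your proposal is correct and matches the paper's approach exactly: the corollary is obtained by composing the two preceding biequivalences $\mathbf{Perm}_u\into\mathbf{Perm}$ and $\mathbf{Perm}\into\mathbf{SymMon}$. The paper's proof is in fact just the single sentence ``Combining these two results gives the following,'' so your version simply spells out the routine closure of biequivalences under composition that the paper leaves implicit.
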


\section{Coherence for \weakNames, version $II$}\label{CohVerIISect}

\begin{prop} Let $\mathbf{B}$ be a bicategory and $\mathbf{C}$ be an \weakName. Then the bicategory $\mathbf{Bicat}(\mathbf{B},\mathbf{C})$ of functors, strong transformations, and modifications is an \weakName. If $\mathbf{C}$ is moreover a \strictName, then so is $\mathbf{Bicat}(\mathbf{B},\mathbf{C})$
\end{prop}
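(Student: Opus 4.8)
The plan is to equip $\mathbf{Bicat}(\mathbf{B},\mathbf{C})$ with monoidal structures on its hom-categories ``pointwise'' from $\mathbf{C}$, and then check that the compatibility axioms of Definition~\ref{weakNameDefn} are inherited from those in $\mathbf{C}$. First I would fix functors $F,G\colon\mathbf{B}\to\mathbf{C}$ and define the hom-category $\mathbf{Bicat}(\mathbf{B},\mathbf{C})(F,G)$; for strong transformations $\eta,\sigma\colon F\Rightarrow G$ one sets $(\eta\oplus\sigma)_a := \eta_a\oplus\sigma_a$ in $\mathbf{C}(Fa,Ga)$, with naturality-$2$-cells built from the monoidal structure on the functors $-\circ(-)$ and $(-)\circ-$ exactly as in the $\mathbf{SymMon}$ example in \S2; the unit transformation is the one whose components are the monoidal units $\mathbf{0}$. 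One must check that $\eta\oplus\sigma$ is again a strong transformation (the invertibility of its structure $2$-cells follows since $\oplus$ on $\mathbf{C}(Fa,Ga)$ preserves isomorphisms and the relevant distributivity/nullity constraints are isomorphisms), and that this makes $\mathbf{Bicat}(\mathbf{B},\mathbf{C})(F,G)$ symmetric monoidal, with symmetry and associativity constraints taken pointwise from $\mathbf{C}$. The coherence pentagon/hexagon for these constraints reduces to the corresponding coherence in each $\mathbf{C}(Fa,Ga)$, applied componentwise.

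Next I would produce the symmetric strong monoidal structures on the composition functors. Horizontal composition in $\mathbf{Bicat}(\mathbf{B},\mathbf{C})$ is the usual Godement/whiskering composite: for $\eta\colon F\Rightarrow G$ and $\theta\colon G\Rightarrow H$ one forms $\theta\ast\eta$ with components $(\theta\ast\eta)_a = \theta_a\circ\eta_a$ (up to the unit/associativity constraints of $\mathbf{C}$). To see that $-\circ(-)$ and $(-)\circ-$ acquire symmetric strong monoidal structures, I would note that on components the composite is computed inside $\mathbf{C}$, where by hypothesis $-\circ(-)$ and $(-)\circ-$ are already symmetric strong monoidal; so the distributivity and nullity isomorphisms for $\mathbf{Bicat}(\mathbf{B},\mathbf{C})$ are defined componentwise by those of $\mathbf{C}$. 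One then checks these assemble into genuine modifications (i.e. are compatible with the naturality $2$-cells of the transformations involved), which is again a componentwise verification using axioms~(\ref{bilinCond}) and~(\ref{bicatMndCond}) in $\mathbf{C}$ — in particular that the distributivity isos in $\mathbf{C}$ are isomorphisms of monoidal functors guarantees they patch across the components.

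Finally, I would verify conditions~(\ref{bilinCond}) and~(\ref{bicatMndCond}) of Definition~\ref{weakNameDefn} for $\mathbf{Bicat}(\mathbf{B},\mathbf{C})$: each asserts the equality/commutativity of certain diagrams of monoidal transformations, and since every structure $2$-cell in sight has been defined componentwise, each such diagram commutes in $\mathbf{Bicat}(\mathbf{B},\mathbf{C})$ as soon as it commutes componentwise in $\mathbf{C}$ — which it does, by the hypothesis that $\mathbf{C}$ is an \weakName. For the last sentence of the statement, if $\mathbf{C}$ is a \strictName\ then each $\mathbf{C}(Fa,Ga)$ is permutative (so $\mathbf{Bicat}(\mathbf{B},\mathbf{C})(F,G)$, being a pointwise product-like construction, is permutative), and the functor $-\circ(-)$ is strictly monoidal componentwise hence strictly monoidal, while $(-)\circ-$ is strictly unital and strong componentwise hence strictly unital and strong; the extra coherence conditions (i)--(v) listed after the definition of \strictName\ again hold componentwise.

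\textbf{Main obstacle.} The conceptual content is easy — everything is ``pointwise'' — so the real work, and the place where care is needed, is bookkeeping: writing down the naturality $2$-cells of $\eta\oplus\sigma$ and of $\theta\ast\eta$ explicitly in terms of the monoidal structures on $-\circ(-)$, $(-)\circ-$, and the bicategory constraints of $\mathbf{C}$, and checking that the proposed distributivity/nullity isomorphisms are actually modifications (compatible with all these $2$-cells). This is a diagram chase of moderate size that ultimately reduces, cell by cell, to axioms~(\ref{bilinCond})--(\ref{bicatMndCond}) in $\mathbf{C}$; I would organize it by treating the two adjoint composition functors symmetrically and invoking the $\mathbf{B}^{op}$ duality to halve the work.
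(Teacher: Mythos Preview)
Your proposal is correct and follows essentially the same approach as the paper: define $(\eta\oplus\sigma)$ componentwise, build its naturality $2$-cell from the distributivity isomorphisms in $\mathbf{C}$, and observe that all structure and all axioms are inherited pointwise from $\mathbf{C}$. The paper's proof is terser---it writes out only the composite defining $(\eta\oplus\sigma)_f$ and then asserts that the remaining verifications ``follow immediately'' from the axioms for $\mathbf{C}$---whereas you spell out more of the bookkeeping (e.g.\ that the distributivity isomorphisms assemble into modifications), but there is no real difference in strategy.
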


\begin{proof}
Let us write $\mathbf{Bicat}_{\mathbf{B},\mathbf{C}}$ for $\mathbf{Bicat}(\mathbf{B},\mathbf{C})$. 
For each pair of functors $F,G:\mathbf{B}\to\mathbf{C}$ of bicategories, we must provide a symmetric monoidal structure on the category $\mathbf{Bicat}_{\mathbf{B},\mathbf{C}}(F,G)$. Let $\eta,\sigma:F\Rightarrow G$ be strong transformations. For every $1$-cell $f:b\to c$ in $\mathbf{B}$, we define $(\eta\oplus\sigma)_b=\eta_b\oplus\sigma_b$ and set
\[(\eta\oplus\sigma)_f:Gf\circ(\eta\oplus\sigma)_b\Rightarrow(\eta\oplus\sigma)_c\circ Ff\]
to be the composite
\[\xymatrix{ Gf\circ(\eta\oplus\sigma)_b & Gf\circ\eta_b\oplus Gf\circ\sigma_b \ar@{=>}_\iso[l] \ar@{=>}[r]^{\eta_f\oplus\sigma_f} & \eta_c\circ Ff\oplus \sigma_c\oplus Ff \ar@{=>}[r]^{\iso} & (\eta\oplus\sigma)_c\circ Ff }. \]
That the above, together with the associativity, unit, and symmetry isomorphisms inherited from $\mathbf{C}(Fb,Gb)$, defines a symmetric monoidal category structure on $\mathbf{Bicat}_{\mathbf{B},\mathbf{C}}(F,G)$ follows immediately since $\mathbf{C}(Fb,Gb)$ is symmetric monoidal.

The composition of $1$-cells in $\mathbf{Bicat}_{\mathbf{B},\mathbf{C}}$ is defined from the composition in $\mathbf{C}$, and the symmetric monoidal structures also come from $\mathbf{C}$. The axioms for a \weakName\ for $\mathbf{Bicat}_{\mathbf{B},\mathbf{C}}$ thus follow immediately from those for $\mathbf{C}$.

If $\mathbf{C}$ is a \strictName, the \strictName \ axioms for $\mathbf{Bicat}_{\mathbf{B},\mathbf{C}}$ also follow immediately since $\mathbf{Bicat}_{\mathbf{B},\mathbf{C}}$ is already a (strict) $2$-category for any $2$-category $\mathbf{C}$.
\end{proof}

\begin{cor} Let $\mathbf{B}$ and $\mathbf{C}$ be \weakNames. Then $\weakMathName(\mathbf{B},\mathbf{C})$ is an \weakName. If $\mathbf{C}$ is moreover a \strictName, then so is the category $\weakMathName(\mathbf{B},\mathbf{C})$.
\end{cor}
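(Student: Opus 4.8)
The plan is to realize $\weakMathName(\mathbf{B},\mathbf{C})$ as a ``sub-\weakName'' of $\mathbf{Bicat}(\mathbf{B},\mathbf{C})$ and then quote the preceding Proposition. Recall the canonical functor of bicategories~(\ref{LocalEquiv}), $\weakMathName(\mathbf{B},\mathbf{C})\to\mathbf{Bicat}(\mathbf{B},\mathbf{C})$: it is the identity on $0$-cells, the inclusion of monoidal transformations among strong transformations on $1$-cells, and full and faithful on $2$-cells. Since $\mathbf{Bicat}(\mathbf{B},\mathbf{C})$ is an \weakName\ (resp.\ a \strictName) by the Proposition, it suffices to check that each piece of that structure restricts along~(\ref{LocalEquiv}); the axioms of Definition~\ref{weakNameDefn}, and in the strict case the five non-automatic coherence conditions for a \strictName, are then inherited for free, being identities between $2$-cells that already hold in $\mathbf{Bicat}(\mathbf{B},\mathbf{C})$.

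The one point that requires work is that, for \weakFunctors\ $F,G:\mathbf{B}\to\mathbf{C}$, the symmetric monoidal structure built on $\mathbf{Bicat}(\mathbf{B},\mathbf{C})(F,G)$ in the proof of the Proposition restricts to the full subcategory $\weakMathName(\mathbf{B},\mathbf{C})(F,G)$ of monoidal transformations. So I would check that if $\eta,\sigma:F\Rightarrow G$ are monoidal transformations then so is $\eta\oplus\sigma$ (with $(\eta\oplus\sigma)_b=\eta_b\oplus\sigma_b$ and $(\eta\oplus\sigma)_f$ the composite displayed in that proof), and that the zero object $\mathbf 0$ (whose $b$-component is the unit of $\mathbf C(Fb,Gb)$ and whose structure $2$-cells are the nullity isomorphisms) is monoidal. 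For the sum this is the monoidal-naturality square for the family $(\eta\oplus\sigma)_f$ --- that $(\eta\oplus\sigma)_{f\oplus g}$ agrees with $(\eta\oplus\sigma)_f\oplus(\eta\oplus\sigma)_g$ after conjugating by the distributivity isomorphisms of $\mathbf C$, plus the analogous unit statement; unwinding $(\eta\oplus\sigma)_f$ and using naturality of the distributivity isomorphisms together with coherence conditions~(\ref{bilinCond}) and~(\ref{bicatMndCond}), this collapses to the corresponding squares for $\eta$ and for $\sigma$, which hold by hypothesis. For $\mathbf 0$ one invokes that the nullity isomorphisms $\mathbf 0\circ-\iso\mathbf 0$ and $-\circ\mathbf 0\iso\mathbf 0$ are isomorphisms of monoidal functors, i.e.\ condition~(\ref{bilinCond}). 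I expect this verification to be the main obstacle; everything else is bookkeeping.

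It then remains to note that the rest of the structure restricts. Horizontal composition of monoidal transformations is monoidal --- this is already part of the bicategory structure of $\weakMathName(\mathbf{B},\mathbf{C})$ --- and the bilinearity data for composition on $\weakMathName(\mathbf{B},\mathbf{C})$ are literally the distributivity and nullity isomorphisms of $\mathbf{Bicat}(\mathbf{B},\mathbf{C})$, whose components are modifications between monoidal transformations and hence lie in the subcategory. Thus $\weakMathName(\mathbf{B},\mathbf{C})$ inherits an \weakName\ structure, for which~(\ref{LocalEquiv}) becomes a \weakFunctor. Finally, if $\mathbf C$ is a \strictName\ then $\mathbf{Bicat}(\mathbf{B},\mathbf{C})$ is a $2$-category with permutative hom-categories; its sub-bicategory $\weakMathName(\mathbf{B},\mathbf{C})$ is again a $2$-category (a sub-bicategory of a $2$-category inherits identity associativity and unit constraints), each $\weakMathName(\mathbf{B},\mathbf{C})(F,G)$ is closed under the strict $\oplus$ and contains $\mathbf 0$ and so is permutative, and the strictness and strict-unitality of the two composition functors, along with the non-automatic conditions, are inherited from $\mathbf{Bicat}(\mathbf{B},\mathbf{C})$. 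Hence $\weakMathName(\mathbf{B},\mathbf{C})$ is a \strictName.
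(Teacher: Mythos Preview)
Your proposal is correct and follows essentially the same approach as the paper: realize $\weakMathName(\mathbf{B},\mathbf{C})$ as a locally full sub-bicategory of $\mathbf{Bicat}(\mathbf{B},\mathbf{C})$ and check that the symmetric monoidal structures on hom-categories restrict, the key point being that the sum of two monoidal transformations is again monoidal because the distributivity isomorphisms in $\mathbf{C}$ are monoidal. The paper's proof records only this key point in a single sentence, whereas you spell out the surrounding bookkeeping (the zero transformation, the inheritance of the strict axioms) more explicitly; but the substance is the same.
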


\begin{proof} 
The only thing left to check is that the sum of two monoidal transformations is also monoidal. This uses that the distributivity isomorphisms (in $\mathbf{C}$) are monoidal isomorphisms.
\end{proof}


We are now ready to give our second proof of strictification.  

\begin{proof}[Second proof of Theorem~\ref{MainThm}]
The idea is that we can use a Yoneda trick:
\[\mathbf{B}\into \weakMathName(\mathbf{B}^{op},\mathbf{SymMon})\to \weakMathName(\mathbf{B}^{op},\mathbf{Perm}_u).\]
Recall (\cite{Lein}) that, for any bicategory $\mathbf{B}$, the Yoneda functor $Y:\mathbf{B}\into\mathbf{Bicat}(\mathbf{B}^{op},\mathbf{CAT})$ sends a $0$-cell $b$ to the functor 
\[\mathbf{B}(-,b):\mathbf{B}^{op}\longrightarrow\mathbf{CAT}\]
 it represents. For a $1$-cell $f:b\to b'$ in $\mathbf{B}$, $Y(f):\mathbf{B}(-,b)\to\mathbf{B}(-,b')$ is the transformation given by composition with $f$. Similarly, given a $2$-cell $\sigma:f\Rightarrow f'$, $Y(\sigma)$ is the resulting modification $\sigma_*:f\circ-\Rrightarrow f'\circ-$.

\begin{prop} The Yoneda embedding $Y:\mathbf{B}\into\mathbf{Bicat}(\mathbf{B}^{op},\mathbf{CAT})$ factors through a local equivalence of \weakNames\ 
$\mathbf{B}\into \weakMathName(\mathbf{B}^{op},\mathbf{SymMon})$.
\end{prop}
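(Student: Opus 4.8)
The plan is to lift the ordinary bicategorical Yoneda embedding --- which lands in $\mathbf{Bicat}(\mathbf{B}^{op},\mathbf{CAT})$ --- to one landing in $\weakMathName(\mathbf{B}^{op},\mathbf{SymMon})$, and then to deduce that it remains locally an equivalence by playing it off against the bicategorical Yoneda lemma of \cite{Lein}.

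The first step is to check that for a $0$-cell $b$ the representable $Y(b)=\mathbf{B}(-,b)\colon\mathbf{B}^{op}\to\mathbf{CAT}$ already underlies an $\mathbf{SMC}$-functor $\mathbf{B}^{op}\to\mathbf{SymMon}$. Each $\mathbf{B}(a,b)$ carries a symmetric monoidal structure by the definition of an \weakName; the functor induced on hom-categories, $\mathbf{B}^{op}(a,a')\to\mathbf{SymMon}\bigl(\mathbf{B}(a,b),\mathbf{B}(a',b)\bigr)$, sends a $1$-cell $g$ to $(-)\circ g$, which is symmetric strong monoidal and depends symmetric-strong-monoidally on $g$ --- this is exactly the bilinearity packaged in the reformulation of Definition~\ref{weakNameDefn} given in the remark immediately following it --- and the two coherence conditions demanded of an $\mathbf{SMC}$-functor (that the two comparison transformations be monoidal) unwind to conditions~(\ref{bilinCond})--(\ref{bicatMndCond}). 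In the same way a $1$-cell $f\colon b\to b'$ gives the transformation ``postcomposition with $f$'', whose components $f\circ(-)$ are symmetric strong monoidal by Definition~\ref{weakNameDefn} and whose structure $2$-cells (the associators of $\mathbf{B}$) are monoidal by condition~(\ref{bicatMndCond}), so that $Y(f)$ is a monoidal transformation; and a $2$-cell of $\mathbf{B}$ gives a bare modification, for which nothing needs checking since modifications in $\weakMathName(\mathbf{B}^{op},\mathbf{SymMon})$ carry no extra condition. This produces the claimed factorization $\mathbf{B}\to\weakMathName(\mathbf{B}^{op},\mathbf{SymMon})$, which is visibly an $\mathbf{SMC}$-functor.

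For the local equivalence, fix $b,b'$ and look at the commuting triangle in which $\mathbf{B}(b,b')\to\weakMathName(\mathbf{B}^{op},\mathbf{SymMon})(Y(b),Y(b'))$ is followed by the forgetful functor to $\mathbf{Bicat}(\mathbf{B}^{op},\mathbf{CAT})(Y(b),Y(b'))$. The composite is an equivalence of categories by the bicategorical Yoneda lemma. The forgetful leg factors through the locally full-and-faithful comparison \eqref{LocalEquiv} followed by postcomposition with $\mathbf{SymMon}\to\mathbf{CAT}$, which is faithful on $2$-cells, so this leg is faithful; a short diagram chase (the composite is an equivalence, the second leg is faithful) then forces $\mathbf{B}(b,b')\to\weakMathName(\mathbf{B}^{op},\mathbf{SymMon})(Y(b),Y(b'))$ to be fully faithful.

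What is left, and where the real content lies, is local essential surjectivity. Given a monoidal transformation $\eta\colon Y(b)\Rightarrow Y(b')$, put $f:=\eta_b(1_b)\in\mathbf{B}(b,b')$. The ordinary bicategorical Yoneda lemma provides a canonical invertible modification $\Theta\colon Y(f)\Rrightarrow\eta$ of the underlying $\mathbf{CAT}$-valued transformations, whose component at a $0$-cell $a$ is the natural isomorphism $f\circ h\xrightarrow{\ \sim\ }\eta_a(h)$ assembled from the transformation $2$-cell $\eta_h$ and a unit constraint of $\mathbf{B}$. Because a modification in $\weakMathName(\mathbf{B}^{op},\mathbf{SymMon})$ is nothing more than a modification of the underlying strong transformations, it suffices to check that each $\Theta_a\colon f\circ(-)\Rightarrow\eta_a$ is a \emph{monoidal} transformation between the symmetric strong monoidal functors $\mathbf{B}(a,b)\to\mathbf{B}(a,b')$. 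This is the main obstacle: one must show that the monoidality square for $\Theta_a$ commutes, reconciling the left-distributivity isomorphism of $\mathbf{B}$ (the monoidal structure on $f\circ(-)$) with the monoidal structure on $\eta_a$ (part of the data of $\eta$ as a strong transformation of $\mathbf{SymMon}$-valued functors) via the explicit formula for $\Theta_a$; the verification uses that $\eta$ is a \emph{monoidal}, not merely strong, transformation --- this is what controls how $\eta_h$ varies with $h$ --- together with the coherence conditions~(\ref{bilinCond})--(\ref{bicatMndCond}) and naturality of the unit constraints. Once $\Theta_a$ is seen to be monoidal, the $\mathbf{SymMon}$-level modification axiom is literally the $\mathbf{CAT}$-level one already in hand, so $\Theta$ is an invertible modification in $\weakMathName(\mathbf{B}^{op},\mathbf{SymMon})$ witnessing $\eta\simeq Y(f)$. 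Hence $\mathbf{B}\to\weakMathName(\mathbf{B}^{op},\mathbf{SymMon})$ is a local equivalence.
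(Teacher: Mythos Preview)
Your argument follows the same overall route as the paper: factor the bicategorical Yoneda embedding through $\weakMathName(\mathbf{B}^{op},\mathbf{SymMon})$ by checking that the representables, the postcomposition transformations, and $Y$ itself carry the required monoidal structure (your opening paragraph compresses what the paper splits into three lemmas), and then play the result off against the ordinary Yoneda lemma to obtain the local equivalence.

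The one substantive difference is your treatment of local essential surjectivity. The paper argues only that the composite
\[
\mathbf{B}(b_0,b_1)\longrightarrow\weakMathName_{\mathbf{B}^{op},\mathbf{SymMon}}(Y(b_0),Y(b_1))\longrightarrow\mathbf{Bicat}_{\mathbf{B}^{op},\mathbf{CAT}}(Y(b_0),Y(b_1))
\]
is an equivalence and the second leg is faithful, and then asserts that ``both functors in this composition are equivalences.'' Taken literally this inference is invalid (consider $*\to\{0,1\}\to *$ with the discrete two-object category in the middle), and what is actually needed is precisely the check you single out: that the canonical Yoneda modification $\Theta\colon Y(f)\Rrightarrow\eta$ has \emph{monoidal} components $\Theta_a$, so that the isomorphism $Y(f)\cong\eta$ produced downstairs lifts to $\weakMathName(\mathbf{B}^{op},\mathbf{SymMon})$. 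Your sketch of that verification (using that $\eta$ is a monoidal transformation together with conditions~(\ref{bilinCond})--(\ref{bicatMndCond})) is the right idea, and once it is in hand your observation that the modification axiom is unchanged finishes the job. In this respect your proof is more complete than the paper's.
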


\begin{proof} We prove this in a series of steps.

\begin{lemma}The Yoneda embedding $Y:\mathbf{B}\into\mathbf{Bicat}(\mathbf{B}^{op},\mathbf{CAT})$ factors through a functor of bicategories $\mathbf{B}\into\mathbf{Bicat}(\mathbf{B}^{op},\mathbf{SymMon})$.
\end{lemma}

\begin{proof}
Fix a $0$-cell $b_0\in\mathbf{B}$. Then $Y(b_0)(b)=\mathbf{B}(b,b_0)\in\mathbf{SymMon}$. Moreover, for any $1$-cell $f:b\to c$, the composition with $f$ functor $\mathbf{B}(c,b_0)\xrightarrow{\circ f}\mathbf{B}(b,b_0)$ is strong symmetric monoidal since $\mathbf{B}$ is a \weakName. Finally, if $\eta:f\Rightarrow g$ is a $2$-cell in $\mathbf{B}$, we need to show that the natural transformation $(-)\circ f\Rightarrow(-)\circ g:\mathbf{B}(c,b_0)\to\mathbf{B}(b,b_0)$ is monoidal. But this follows from naturality of the isomorphism $h_1\circ(-)\oplus h_2\circ(-)\iso (h_1\oplus h_2)\circ(-)$ of functors $\mathbf{B}(b,c)\to\mathbf{B}(b,b_0)$. 

Fix a $1$-cell $f_0:b_0\to b_1$ in $\mathbf{B}$. Then for each $b\in\mathbf{B}$, the induced functor $\mathbf{B}(b,b_0)\xrightarrow{f\circ}\mathbf{B}(b,b_1)$ is strong symmetric monoidal by the definition of a \weakName. The requirement that the associativity isomorphism in $\mathbf{B}$ is monoidal makes $\mathbf{B}(-,f_0)$ a transformation of functors to $\mathbf{SymMon}$ (i.e., the required natural transformations are monoidal).

Fix a $2$-cell $\sigma:f_0\Rightarrow g_0:b_0\to b_1$ in $\mathbf{B}$. Then naturality of the distributivity isomorphisms in $\mathbf{B}$ ensures that for each $b\in\mathbf{B}$ the natural transformation
\[\sigma_*:(f_0\circ)\Rightarrow(g_0\circ):\mathbf{B}(b,b_0)\to\mathbf{B}(b,b_1)\]
is monoidal.

Finally, we must show that for every $b_0\xrightarrow{f_0}b_1\xrightarrow{f_1}b_2$ in $\mathbf{B}$, the isomorphisms $Y(f_1)\circ Y(f_0)\iso Y(f_1\circ f_0)$ and $1_{Y(b_0)}\iso Y(1_{b_0})$ are $2$-cells in $\mathbf{Bicat}(\mathbf{B}^{op},\mathbf{SymMon})$. But this is given by the condition that the associativity and unit isomorphisms in $\mathbf{B}$ are monoidal.
\renewcommand{\qedsymbol}{$\blacksquare$}
\end{proof}

\begin{lemma}The functor of bicategories $\mathbf{B}\into\mathbf{Bicat}(\mathbf{B}^{op},\mathbf{SymMon})$ is an \weakFunctor.
\end{lemma}

\begin{proof}
For each $b_0,b_1\in\mathbf{B}$,  we must show that 
\[Y:\mathbf{B}(b_0,b_1)\to\mathbf{Bicat}_{\mathbf{B}^{op},\mathbf{SymMon}}(\mathbf{B}(-,b_0),\mathbf{B}(-,b_1))\]
is strong symmetric monoidal. The desired isomorphism $Y(f_0)\oplus Y(f_1)\iso Y(f_0\oplus f_1)$ is given by a (right) distributivity isomorphism. 

That the natural isomorphism $Y(f\circ g)\iso Y(f)\circ Y(g)$ is monoidal (in $f$) is given by the condition that the  associativity isomorphism in $\mathbf{B}$ is monoidal.
\renewcommand{\qedsymbol}{$\blacksquare$}
\end{proof}

\begin{lemma}The image of the \weakFunctor\ $\mathbf{B}\into\mathbf{Bicat}(\mathbf{B}^{op},\mathbf{SymMon})$ lies in the sub-$2$-category $\weakMathName(\mathbf{B}^{op},\mathbf{SymMon})$.
\end{lemma}

\begin{proof}
Fix a $0$-cell $b_0\in\mathbf{B}$. We must show that the functor of bicategories $Y(b_0):\mathbf{B}^{op}\to\mathbf{SymMon}$ is in fact an \weakFunctor. Thus for each $b,c\in\mathbf{B}$, the functor
\[\mathbf{B}^{op}(b,c)=\mathbf{B}(c,b)\to\mathbf{SymMon}(\mathbf{B}(b,b_0),\mathbf{B}(c,b_0))\]
must be strong symmetric monoidal. But this is part of the definition of an \weakName. That the natural isomorphism 
\[\bigl( Y(b_0)(f)\bigr) \circ \bigl(Y(b_0)(-)\bigr) \iso Y(b_0)\bigl( f\circ-\bigr)\]
is monoidal in $f$ follows from the fact that the associativity isomorphisms in $\mathbf{B}$ are monoidal.

Fix a $1$-cell $f_0:b_0\to b_1$ in $\mathbf{B}$. We must show that $Y(f_0):Y(b_0)\Rightarrow Y(b_1)$ is a monoidal transformation. This follows from the condition that the associativity isomorphism $(f_0\circ h)\circ -\iso f_0\circ(h\circ -)$ is monoidal.

The sub-$2$-category $\weakMathName(\mathbf{B}^{op},\mathbf{SymMon})\subset\mathbf{Bicat}(\mathbf{B}^{op},\mathbf{SymMon})$ is locally full, so there is no condition to check for $2$-cells. 
\renewcommand{\qedsymbol}{$\blacksquare$}
\end{proof}

Finally, we must verify that $Y:\mathbf{B}\to\weakMathName(\mathbf{B}^{op},\mathbf{SymMon})$ is a local equivalence. In other words, we must show that for every pair of $0$-cells $b_0,b_1\in\mathbf{B}$, the strong symmetric monoidal functor $Y:\mathbf{B}(b_0,b_1)\to\weakMathName_{\mathbf{B}^{op},\mathbf{SymMon}}(Y(b_0),Y(b_1))$ is an equivalence. 

But we know already that the composite
\[\mathbf{B}(b_0,b_1)\to\weakMathName_{\mathbf{B}^{op},\mathbf{SymMon}}(Y(b_0),Y(b_1))\to\mathbf{Bicat}_{\mathbf{B}^{op},\mathbf{CAT}}(Y(b_0),Y(b_1))\]
is an equivalence, and the second functor is clearly faithful (\ref{LocalEquiv}). It follows that both functors in this composition are equivalences.
\end{proof}

\begin{lemma} Let $\mathbf{B}$, $\mathbf{C}$, and $\mathbf{D}$ be \weakNames. Any biequivalence \mbox{$J:\mathbf{C}\to\mathbf{D}$} induces a biequivalence $J_*:\weakMathName(\mathbf{B},\mathbf{C})\to \weakMathName(\mathbf{B},\mathbf{D})$.
\end{lemma}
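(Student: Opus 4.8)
The plan is to verify, for the functor $J_*$ obtained by whiskering with $J$, the two conditions in the characterization of biequivalences stated above (local equivalence together with surjectivity-up-to-equivalence on $0$-cells). First I would check that $J_*$ is a well-defined \weakFunctor. On a $0$-cell it returns the composite $JF$ of \weakFunctors, which is again an \weakFunctor; on monoidal transformations and modifications it whiskers by $J$, and one checks that these whiskerings are again monoidal (resp.\ are modifications) because $J$ is locally strong symmetric monoidal and the structure isomorphisms of an \weakName\ are monoidal. The induced functors $\weakMathName(\mathbf B,\mathbf C)(F,G)\to\weakMathName(\mathbf B,\mathbf D)(JF,JG)$ are strong symmetric monoidal, the isomorphism $J_*(\eta\oplus\sigma)\iso J_*\eta\oplus J_*\sigma$ being induced componentwise from the strong monoidal structure of $J$ on $\mathbf C(Fb,Gb)$.

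Since $J$ is a biequivalence, fix a quasi-inverse \weakFunctor\ $K\colon\mathbf D\to\mathbf C$ together with equivalences $JK\simeq 1_{\mathbf D}$ in $\weakMathName(\mathbf D,\mathbf D)$ and $1_{\mathbf C}\simeq KJ$ in $\weakMathName(\mathbf C,\mathbf C)$. For surjectivity-up-to-equivalence on $0$-cells, given an \weakFunctor\ $H\colon\mathbf B\to\mathbf D$ take $F:=KH$; then $J_*F=(JK)H$, and whiskering the equivalence $JK\simeq 1_{\mathbf D}$ by $H$ on the right exhibits $J_*F\simeq H$ in $\weakMathName(\mathbf B,\mathbf D)$ — here one must again observe that this whiskering preserves monoidality, so that the equivalence lives genuinely in $\weakMathName(\mathbf B,\mathbf D)$ and not merely in $\mathbf{Bicat}(\mathbf B,\mathbf D)$.

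It remains to show $J_*$ is a local equivalence, i.e.\ that for each pair $F,G\colon\mathbf B\to\mathbf C$ the functor $J_*\colon\weakMathName(\mathbf B,\mathbf C)(F,G)\to\weakMathName(\mathbf B,\mathbf D)(JF,JG)$ is an equivalence of categories. Fullness and faithfulness on modifications are immediate from $J$ being locally full and faithful, since $J_*$ acts componentwise by $J$ on the hom-categories $\mathbf C(Fb,Gb)$ and the modification axioms transfer back along $J$ by local faithfulness. The substance is essential surjectivity: given a monoidal transformation $\theta\colon JF\Rightarrow JG$, use local essential surjectivity of $J$ to pick objects $\eta_b\in\mathbf C(Fb,Gb)$ with isomorphisms $\phi_b\colon J\eta_b\iso\theta_b$; conjugating the invertible structure cells $\theta_f$ by the $\phi$'s and by the pseudofunctor constraints of $J$ and lifting through the local full-faithfulness of $J$ produces $2$-cells $\eta_f\colon Gf\circ\eta_b\iso\eta_c\circ Ff$ in $\mathbf C$, and local faithfulness of $J$ then forces the transformation axioms, naturality in $f$, and the monoidality of $\eta$ to hold in $\mathbf C$ because the corresponding equations hold for $\theta$ in $\mathbf D$ — the monoidality step using that $J$, being an \weakFunctor, carries the distributivity and symmetry isomorphisms of $\mathbf C$ to those of $\mathbf D$. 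The $\phi_b$ then assemble, by construction, into an invertible modification $J_*\eta\iso\theta$, and the characterization of biequivalences completes the argument.

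I expect the essential-surjectivity step to be the main obstacle: it is the familiar ``transport of structure along an equivalence'' argument, routine in outline but requiring care to confirm that every transformation axiom — and in particular the \emph{monoidal}-transformation axiom — survives transport, while keeping track of the pseudofunctor constraints of $F$, $G$, and $J$ and of the fact that $J$ preserves the monoidal and distributivity data only up to its own coherent structure isomorphisms. An alternative that avoids some of this bookkeeping is to build an explicit quasi-inverse $K_*$ to $J_*$ by whiskering with $K$ and to turn the equivalences $JK\simeq 1_{\mathbf D}$ and $1_{\mathbf C}\simeq KJ$ into equivalences $J_*K_*\simeq 1$ and $1\simeq K_*J_*$; this repackages, but does not eliminate, the same monoidality checks.
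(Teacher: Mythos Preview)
The paper states this lemma without proof; it is invoked immediately after its statement and treated as routine, so there is no argument in the paper to compare against. Your sketch is a correct and standard way to fill this gap: verifying that postcomposition by $J$ is an \weakFunctor, then using the characterization of biequivalences by checking surjectivity-up-to-equivalence on $0$-cells via a quasi-inverse $K$ and local equivalence via transport of structure along the local equivalences $J\colon\mathbf C(Fb,Gb)\to\mathbf D(JFb,JGb)$. Your caution about the essential-surjectivity step is appropriate but not an obstruction---local full faithfulness of $J$ really does reflect all the required equations, including the monoidal-transformation axiom, once one has chosen the components $\eta_b$ and conjugated the structure cells---and the alternative you mention (whiskering the equivalences $JK\simeq 1$ and $1\simeq KJ$ to obtain $J_*K_*\simeq 1$ and $1\simeq K_*J_*$ directly) is equally valid and perhaps closer in spirit to what the paper's silence suggests.
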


Combining these two results with a quasi-inverse $J$ to the biequivalence $\mathbf{Perm}_u\into\mathbf{SymMon}$ gives a local equivalence
\[\mathbf{Y}:\mathbf{B}\to \weakMathName(\mathbf{B}^{op},\mathbf{Perm}_u)\]
that is injective on $0$-cells. Since $\weakMathName(\mathbf{B}^{op},\mathbf{Perm})$ is a \strictName, the full sub-$2$-category containing only $0$-cells in the image of $\mathbf{Y}$ is a \strictName\ equipped with a biequivalence from $\mathbf{B}$.
\renewcommand{\qedsymbol}{$\blacksquare$}
\end{proof}

\section{The single $0$-cell case}\label{SingZeroSect}

As we have discussed above, a single $0$-cell \weakName\ is a bimonoidal category, and a single $0$-cell \strictName\ is a strict bimonoidal category, a strong form of the ring categories of \cite{EM}. In this final section, we discuss the strictification of bimonoidal categories given in \S\ref{CohVerIISect}.

Thus let $\mathbf{M}$ be a bimonoidal category, thought of as a single $0$-cell \weakName\ $\mathbf{BM}$. The strictification of $\mathbf{M}$ outlined above is the endomorphism (strict monoidal) category of the \weakFunctor\ 
\[\mathbf{BM}^{op}\xrightarrow{\mathbf{Y}(*)}\mathbf{SymMon}\xrightarrow{J}\mathbf{Perm}_u.\]
Thus the objects of the strictification $\cP$ of $\mathbf{M}$ (really the $1$-cells of the strictification of $\mathbf{BM}$) are the strong monoidal transformations $J\mathbf{Y}(*)\Rightarrow J\mathbf{Y}(*)$. Such a transformation consists of a strong monoidal, strictly unital functor $\Phi:J(\mathbf{M})\to J(\mathbf{M})$ and, for each $m\in\mathbf{M}$, an isomorphism
\[ \Lambda^\Phi_m:\Phi\circ J(-\otimes m)\iso J(-\otimes m)\circ \Phi \]
that is natural and monoidal in $m$.

The morphisms $(\Phi,\Lambda^\Phi)\to(\Psi,\Lambda^\Psi)$ of $\cP$ ($2$-cells of the strictification of $\mathbf{BM}$) are modifications. This consists of a monoidal transformation $\sigma:\Phi\Rightarrow\Psi$ such that for each $m\in\mathbf{M}$, the diagram
\[\xymatrix{
\Phi\circ J(-\otimes m) \ar[r]^{\sigma\circ 1} \ar[d]_{\Lambda^\Phi_m} & \Psi\circ J(-\otimes m) \ar[d]^{\Lambda^\Psi_m} \\
J(-\otimes m)\circ \Phi \ar[r]_{1\circ\sigma} & J(-\otimes m)\circ \Psi
}\]
commutes.

The additive monoidal structure is given by taking sums of strong monoidal, strictly unital functors, as in Example~\ref{PermuEG}. The unit for this monoidal structure is the constant functor at $0\in J(\mathbf{M})$. The multiplicative monoidal structure is given by composition, and the unit for this monoidal structure is the identity functor of $J(\mathbf{M})$.

In case the additive monoidal structure on $\mathbf{M}$ is already strict, the effect of $J$ is simply to make the right multiplication functors $-\otimes m$ strictly unital. Let us write $(-\tilde{\otimes}m)$ for $J(-\otimes m)$. That is,
\[
n\tilde{\otimes}m=\left\{\begin{array}{cc} n \otimes m & n\neq 0 \\ 0 & n=0.\end{array}\right.
\]
Then for $\Phi:M\to M$ strong monoidal and strictly unital as above, the data of isomorphisms
\[\Lambda_m^\Phi:\Phi\circ(-\tilde{\otimes}m)\iso(-\tilde{\otimes}m)\circ\Phi\]
for all $m$ is equivalent to a single isomorphism
\[\lambda^\Phi:\Phi\iso \Phi(1)\tilde{\otimes}-=\left\{\begin{array}{cc} \Phi(1) \otimes - & \Phi(1)\neq 0 \\ 0 & \Phi(1)=0.\end{array}\right.\]
Morphisms $\sigma:(\Phi,\lambda^\Phi)\Rightarrow(\Psi,\lambda^\Psi)$ must make the diagram
\[\xymatrix{
\Phi(n) \ar[r]^{\sigma_n} \ar[d]_{\lambda^\Phi_n} & \Psi(n) \ar[d]^{\lambda^\Psi_n} \\
\Phi(1)\tilde{\otimes}n \ar[r]_{\sigma_1\otimes 1} & \Psi(1)\tilde{\otimes} n
}\]
commute and are therefore determined by their value at $1$. From this it follows that the evaluation at $1$ functor $ev_1:\cP\to\mathbf{M}$ is an equivalence of bimonoidal categories.


\bibliographystyle{plain}
\bibliography{Perm2CatRefs}

\end{document}